\documentclass{amsart}
\usepackage{amsmath,amssymb,amsthm,mathtools, tikz-cd}
\usepackage{hyperref}
\usepackage{algorithm}
\usepackage{algpseudocode}

\usepackage{color}

\usepackage[normalem]{ulem}
\newcommand{\Z}{\mathbb{Z}}
\newcommand{\R}{\mathbb{R}}
\newcommand{\cF}{\mathcal{F}}
\newcommand{\cV}{\mathcal{V}}

\newcommand{\cU}{\mathcal{U}}
\newcommand{\cP}{\mathcal{P}}

\DeclareMathOperator{\GL}{GL}
\DeclareMathOperator{\im}{im}

\DeclareMathOperator{\Span}{span}
\DeclareMathOperator{\SAT}{SAT}
\DeclareMathOperator{\UNSAT}{UNSAT}
\DeclareMathOperator{\supp}{supp}

\newtheorem{theorem}{Theorem}[section]
\newtheorem{proposition}[theorem]{Proposition}
\newtheorem{lemma}[theorem]{Lemma}
\newtheorem{corollary}[theorem]{Corollary}

\theoremstyle{definition}

\theoremstyle{remark}
\newtheorem{remark}[theorem]{Remark}

\title{On the toric lifting properties for simplicial~$3$-spheres}
\date{\today}

\author[S. Choi]{Suyoung Choi}
\address{Department of Mathematics, Ajou University, 206, World cup-ro, Yeongtong-gu, Suwon 16499, Republic of Korea}
\email{schoi@ajou.ac.kr}

\author[Y. Guan]{Yuanxin Guan}
\address{School of Mathematical Sciences, Fudan University, 220 Handan Road, Yangpu District, Shanghai 200433, China}
\email{yxguan21@m.fudan.edu.cn}

\author[H. Jang]{Hyeontae Jang}
\address{School of Mathematics, Korea Institute for Advanced Study, 85 Hoegiro, Dongdaemun-gu, Seoul 02455, Republic of Korea}
\email{hjang1112@kias.re.kr}

\author[Z. L\"u]{Zhi L\"u}
\address{School of Mathematical Sciences, Fudan University, 220 Handan Road, Yangpu District, Shanghai 200433, China}
\email{zlu@fudan.edu.cn}

\subjclass[2020]{Primary 57S12; Secondary 14M25, 52B05, 05E45.}
\keywords{Lifting problem, characteristic map, simplicial $3$-sphere, toric lifting property, universal complex}
\thanks{The first author was supported by the National Research Foundation of Korea Grant funded by the Korean Government (RS-2025-00521982). 
The third author was supported by a KIAS Individual Grant (MG105401) at Korea Institute for Advanced Study. 
The second and fourth authors were partially supported by the grant SIMIS-ID-2025-TP}

\begin{document}
\begin{abstract}
    We study the lifting problem for mod~$2$ characteristic maps over simplicial $3$-spheres. 
    Using a bad-block partition of the universal complex~$X(\Z_2^4)$, we prove an avoidance criterion for liftability. 
    We show that every simplicial $3$-sphere with at most $20$ vertices has the toric lifting property. 
    We also obtain image-size and join-type results, and prove sharpness of the image-size bound in the universal-complex sense.
\end{abstract}

\maketitle

\section{Introduction}

Characteristic maps are among the basic combinatorial objects in toric topology.
An integral characteristic map encodes the local integral unimodularity data that appears in quasitoric and toric-type constructions, while a mod~$2$ characteristic map is the corresponding real or small-cover analogue.
The \emph{lifting problem} asks when a mod~$2$ characteristic map is the mod~$2$  reduction of an integral characteristic map.
Thus the problem gives a natural way to compare characteristic data over $\Z_2$ with characteristic data over~$\Z$.

Let $K$ be an $(n-1)$-dimensional simplicial sphere on $[m]=\{1,\ldots,m\}$, and let
$$
    \lambda^\R\colon [m]\longrightarrow \Z_2^n
$$
be a mod~$2$ characteristic map, that is, the images of the vertices of every facet of~$K$ form a basis of~$\Z_2^n$.
The lifting problem, originally proposed by Zhi L\"u at the Problem Session of the conference ``Toric
Topology 2011'' in Osaka, as documented in \cite{Choi-Park2016Wedge}, asks whether there exists an integral characteristic map
$$
    \lambda\colon [m]\longrightarrow \Z^n
$$
that is, a map whose values on the vertices of every facet of $K$ form a basis of $\Z^n$,
such that the following diagram commutes:
$$
    \begin{tikzcd}
        & \Z^n \arrow[d, "\bmod 2"] \\
        \left[m \right] \arrow[ur, "\exists \lambda", dashed] \arrow[r, "\lambda^{\R}"'] & \Z_2^n.
    \end{tikzcd}
$$
In this case, we call $\lambda$ a \emph{lift} of $\lambda^\R$.
If a lift $\lambda$ sends $[m]$ to $\{0,1\}$-vectors, then it is called the \emph{$\{0,1\}$-lift} of~$\lambda^\R$.
We say that $K$ has the \emph{toric lifting property} if every mod~$2$ characteristic map over $K$ admits a lift.

We now elaborate the fundamental obstruction behind the lifting problem.
The primary obstacle to solving the lifting problem does not lie in separately lifting each vector of the mod~$2$ characteristic map.
Indeed, every vector in $\Z_2^n$ admits infinitely many integral representatives in $\Z^n$, all congruent to one another modulo~$2$.
Instead, the key difficulty is to choose these representatives simultaneously so that the images of the vertices of every facet form a unimodular basis of~$\Z^n$.
Equivalently, the problem measures the gap between nonsingularity over $\Z_2$ and unimodularity over $\Z$.
Although the determinant of an integer matrix which is nonzero mod~$2$ is merely odd over~$\Z$, it need not be equal to $\pm1$. 
For example, a 0-1 matrix with odd determinant may have determinant $\pm3$, which fails the unimodularity condition required for integral characteristic maps; such matrices form the so-called bad blocks. 
We formalize this obstruction via the bad-block partition of the universal complex~$X(\Z_2^4)$ constructed in Section~\ref{universal cpx}.
The lifting problem asks whether this obstruction can be  eliminated globally by a suitable choice of integral representatives. 

Several positive results have been obtained on the toric lifting problem. 
First, every simplicial~$2$-sphere has the toric lifting property.
More generally, Choi, Jang, and Vall\'ee~\cite{Choi-Jang-Vallee2025JLMS} proved that every $(n-1)$-dimensional PL~sphere with at most $n+4$ vertices has the toric lifting property. 
For $n=4$, their theorem covers PL~$3$-spheres with at most $8$ vertices.
It is worth noting that all $(n-1)$-dimensional PL spheres form a proper subclass of $(n-1)$-dimensional simplicial spheres when $n\geq 6$, while the two families coincide when $n\leq 4$.  
The case $n=5$ remains an open problem.

This paper focuses on simplicial~$3$-spheres, the first substantially nontrivial family.
We first establish a bad-block avoidance criterion from $X(\Z_2^4)$, formulated as Lemma~\ref{lem:block-avoidance}, which serves as our foundational technical tool. 
Based on this criterion, we obtain the following results.
\begin{enumerate}
    \item Theorem~\ref{thm:main} shows that every simplicial~$3$-sphere with at most $20$ vertices has the toric lifting property. 
        This extends the previous result for PL~$3$-spheres with at most $8$ vertices.
    \item Corollary~\ref{cor:neighborly} establishes the toric lifting property for all neighborly simplicial~$3$-spheres. 
        Theorem~\ref{thm:flag25} gives the same conclusion for flag simplicial~$3$-spheres with at most $25$ vertices.
    \item Theorem~\ref{thm:image-size-13} shows that every mod~$2$ characteristic map with at most $13$ distinct vertex images over a simplicial~$3$-sphere admits a lift. 
        Corollary~\ref{cor:orientable-small-cover} then gives the same conclusion for every orientable mod~$2$ characteristic map.
    \item Theorem~\ref{thm:join-type-3-spheres} establishes the toric lifting property for every simplicial~$3$-sphere that is a nontrivial join of two lower-dimensional simplicial spheres.
    \item Proposition~\ref{prop:14-subset-nonliftable} shows that, for every $14$-element subset $S$ of the vertex set of $X(\Z_2^4)$, the inclusion~$X[S]\hookrightarrow X(\Z_2^4)$ does not admit a lift. 
        Thus, the bound in Theorem~\ref{thm:image-size-13} is sharp in the universal-complex sense. 
        We also construct a simplicial~$3$-sphere admitting a nondegenerate simplicial map to $X(\Z_2^4)$ that is surjective on facets.
\end{enumerate}

\section{Universal Complexes} \label{universal cpx}
Let $A=(a_1,\dots,a_{15})$ be the $4\times 15$ $\Z_2$-matrix whose columns are the nonzero vectors of $\Z_2^4$ listed in lexicographic order
$$
    A= \left(
    \begin{array}{ccccccccccccccc}
        0&0&0&0&0&0&0&1&1&1&1&1&1&1&1\\
        0&0&0&1&1&1&1&0&0&0&0&1&1&1&1\\
        0&1&1&0&0&1&1&0&0&1&1&0&0&1&1\\
        1&0&1&0&1&0&1&0&1&0&1&0&1&0&1
    \end{array} \right),
$$
and let $X\bigl(\Z_2^4\bigr)$ be the corresponding universal complex formed by all linear independent subsets of $\Z_2^4$.
Thus the vertex set of~$X(\Z_2^4)$ is $\cV\coloneq\cV(X(\Z_2^4)) = \{a_1,\dots,a_{15}\}$, and its facet set is
$$
    \cF\coloneq\cF(X(\Z_2^4)) =\Bigl\{ \{a_{i_1},a_{i_2},a_{i_3},a_{i_4}\}\subset \cV \Bigm| \det(a_{i_1},a_{i_2},a_{i_3},a_{i_4})\equiv 1 (\bmod 2) \Bigr\}.
$$
Then $|\cF|=\frac{|\GL(4,\Z_2)|}{4!}=\frac{(2^4-1)(2^4-2)(2^4-2^2)(2^4-2^3)}{4!}=840$.

With our choice of ordering, and viewing each $a_i$ as an integer vector in $\{0,1\}^4\subset \Z^4$, the set of facets of $X(\Z_2^4)$ whose determinants have absolute value $3$ is
$$
    B= \left\{
        \begin{aligned}
            &\{a_3,a_5,a_9,a_{14}\},\ \{a_3,a_6,a_{10},a_{13}\},\ \{a_5,a_6,a_{11},a_{12}\},\\
            &\{a_7,a_9,a_{10},a_{12}\},\ \{a_7,a_{11},a_{13},a_{14}\}
        \end{aligned}
    \right\}.
$$
We call $B$ the \emph{standard bad block}.

Note that the action of $\GL(4,\Z_2)$ on $X(\Z_2^4)$ permutes the facets of $X(\Z_2^4)$.
One interesting fact, originally established in the proof of \cite[Theorem~6.1]{Choi-Jang-Vallee2025JLMS}, is that $\cF$ admits a partition
$$
    \cF=\bigsqcup_{\alpha=1}^{168} B_\alpha
$$
into $168$ subsets of size $5$, and for each $\alpha$ there exists $g_\alpha\in \GL(4,\Z_2)$ such that
$$
    g_\alpha(B_\alpha)=B.
$$
We may assume that $B_1=B$.

For a facet $\tau=\{u_1,u_2,u_3,u_4\}\in \cF$, set
$$
    P(\tau)\coloneq \{\tau\}\cup \bigl\{ \{u_i\}\cup \{u_j+u_i\mid j\neq i\} \bigm| i=1,2,3,4 \bigr\}.
$$
With this notation, the standard bad block is
$$
    B=P(\{a_3,a_5,a_9,a_{14}\}).
$$
The distinct sets $P(\tau)$, as $\tau$ runs over the facets of $X(\Z_2^4)$, form the bad-block partition.
In particular, for every basis $\{u,p,q,r\}$ of $\Z_2^4$, the five facets
\begin{align*}
    &\{u,p,q,r\},\quad \{u,p+u,q+u,r+u\},\quad  \{u+p,p,q+p,r+p\}, \\
    &\{u+q,p+q,q,r+q\},\quad \{u+r,p+r,q+r,r\}  
\end{align*}
belong to the same block of the bad-block partition.

The following simple observation explains why bad blocks control the validity of integral lifts.

\begin{lemma}\label{lem:avoid-standard-bad-block}
    Let $Y$ be a pure $3$-dimensional simplicial complex, and let $\eta^\R$ be a mod~$2$ characteristic map over~$Y$.
    Suppose that no facet image of $\eta^\R$ lies in the standard bad block~$B$.
    Then the map obtained by taking the $\{0,1\}$-representative of each vector $\eta^\R(v)\in \Z_2^4$ is a $\{0,1\}$-lift of $\eta^\R$.
\end{lemma}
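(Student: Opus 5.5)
The plan is to reduce the lemma to a fact about $4\times4$ $\{0,1\}$-matrices. Let $\lambda$ send each vertex $v$ to the unique vector $\lambda(v)\in\{0,1\}^4\subset\Z^4$ reducing to $\eta^\R(v)$. Reduction modulo $2$ gives $\lambda\equiv\eta^\R$ automatically. So it suffices to check that for every facet $\sigma=\{v_1,\dots,v_4\}$ of $Y$, the integer matrix $(\lambda(v_1),\dots,\lambda(v_4))$ has determinant $\pm1$.

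Fix such a facet. Since $\eta^\R$ is a mod~$2$ characteristic map, its values $\eta^\R(v_1),\dots,\eta^\R(v_4)$ form a basis of $\Z_2^4$. In particular they are four distinct nonzero vectors, so the image set $\eta^\R(\sigma)$ is a facet $\tau$ of $X(\Z_2^4)$, and the columns of the integer matrix are the columns $a_{i_1},\dots,a_{i_4}$ of $A$ indexed by $\tau$, up to order. The determinant is therefore $\pm\det(a_{i_1},\dots,a_{i_4})$ computed over $\Z$, and it is odd because the mod~$2$ determinant is $1$.

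Next I would bound this determinant. For a $4\times4$ matrix with entries in $\{0,1\}$, Hadamard's bound (via the standard $\{0,1\}\to\{\pm1\}$ bordering trick, or directly) gives $|\det|\le 3$. So an odd determinant must be $\pm1$ or $\pm3$. By the definition of the standard bad block $B$, the facets of $X(\Z_2^4)$ whose $\{0,1\}$-determinant has absolute value $3$ are exactly the five listed in $B$. The hypothesis says $\tau\notin B$, hence $|\det|=1$, and $\lambda$ is unimodular on $\sigma$.

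The only nonformal step is the claim that the maximal absolute determinant of a $4\times 4$ $0$-$1$ matrix is $3$. I would cite this as the known value of the Hadamard maximal-determinant problem for $n=4$, or verify it by the bordering correspondence: a $4\times4$ $\{0,1\}$-matrix with determinant $d$ corresponds to a $5\times5$ $\pm1$-matrix with determinant $\pm2^4 d$. Hadamard's inequality bounds the latter by $5^{5/2}<56$, forcing $|d|\le3$. Everything else, including identifying the $|\det|=3$ facets with $B$, is already supplied by the statement preceding the lemma.
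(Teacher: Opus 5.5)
Your proof is correct and follows the paper's argument: take $\{0,1\}$-representatives, note that each facet determinant is odd with absolute value at most $3$, and use $\tau\notin B$ to force $\pm1$. Your bordering-plus-Hadamard check of $|\det|\le 3$ is a useful addition, since the paper simply asserts that bound: the associated $5\times5$ $\pm1$-matrix has determinant $\pm16d$, and $5^{5/2}<64$.
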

\begin{proof}
    Let $\eta$ be the $\{0,1\}$-representative lift of $\eta^\R$.
    For each facet $\sigma$ of $Y$, the image~$\widehat{\eta^\R}(\sigma)$ is a facet of $X(\Z_2^4)$.
    The determinant of the corresponding $0$-$1$ matrix is odd.
    Since every $4\times4$ 0-1 matrix has determinant of absolute value at most~$3$, an odd determinant is equal to~$\pm 1$ or~$\pm 3$.
    By the definition of $B$, the only facets of $X(\Z_2^4)$ whose $0$-$1$ determinant has absolute value $3$ are precisely the facets in~$B$.
    Since $\widehat{\eta^\R}(\sigma)\notin B$, this determinant has absolute value $1$.
    Thus the images of the vertices of every facet of $Y$ form a unimodular basis of $\Z^4$, and so $\eta$ is a $\{0,1\}$-lift.
\end{proof}

We note that a pure $3$-dimensional simplicial complex $Y$ supports a characteristic map $\lambda^\R$ if and only if there is a nondegenerate simplicial map $\widehat{\lambda^\R} \colon Y \to X(\Z_2^4)$.
For a facet $\sigma = \{i_1, i_2, i_3, i_4\}$ of $Y$, we write
$$
    \widehat{\lambda^\R}(\sigma) \coloneq\{\widehat{\lambda^\R}(i_1),\widehat{\lambda^\R}(i_2),\widehat{\lambda^\R}(i_3),\widehat{\lambda^\R}(i_4)\}\in\cF,
$$
and define
$$
    \Sigma(\lambda^\R)\coloneq \Bigl\{\widehat{\lambda^\R}(\sigma)\ \Bigm|\ \sigma\in\cF(Y)\Bigr\}.
$$

\begin{lemma}[Bad-block avoidance criterion]\label{lem:block-avoidance}
    Let $Y$ be a pure $3$-dimensional simplicial complex, and let $\lambda^\R$ be a mod~$2$ characteristic map over $Y$.
    If $\Sigma(\lambda^\R)$ is disjoint from some block~$B_\alpha$ of the bad-block partition
    $$
        \cF(X(\Z_2^4))=\bigsqcup_{\alpha=1}^{168}B_\alpha,
    $$
    then $\lambda^\R$ admits a lift.
    Or equivalently, if $\Sigma(\lambda^\R)$ meets fewer than $168$ blocks of this partition, then $\lambda^\R$ admits a lift.
\end{lemma}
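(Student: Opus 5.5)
The idea is to transport the problem by a linear change of coordinates so that the avoided block $B_\alpha$ becomes the standard bad block $B$, and then invoke Lemma~\ref{lem:avoid-standard-bad-block}. Suppose $\Sigma(\lambda^\R)\cap B_\alpha=\emptyset$, and let $g_\alpha\in\GL(4,\Z_2)$ satisfy $g_\alpha(B_\alpha)=B$, as in the partition $\cF=\bigsqcup_\alpha B_\alpha$. Define $\eta^\R\coloneq g_\alpha\circ\lambda^\R\colon [m]\to\Z_2^4$. Since $g_\alpha$ is an invertible linear map, it sends bases to bases. Hence $\eta^\R$ is again a mod~$2$ characteristic map over~$Y$, and $\Sigma(\eta^\R)=g_\alpha(\Sigma(\lambda^\R))$, where $g_\alpha$ acts on $\cF$ as a permutation. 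Because $g_\alpha$ acts bijectively on $\cF$, the condition $\Sigma(\lambda^\R)\cap B_\alpha=\emptyset$ becomes $\Sigma(\eta^\R)\cap B=\emptyset$.

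Lemma~\ref{lem:avoid-standard-bad-block} then applies to $\eta^\R$. Taking $\{0,1\}$-representatives gives an integral characteristic map $\eta\colon[m]\to\Z^4$ that reduces to $\eta^\R$ modulo~$2$.

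The key step, and the only real obstacle, is pulling $\eta$ back to a lift of $\lambda^\R$. This requires an integral lift of $g_\alpha^{-1}$. I would use the standard fact that the reduction map $\GL(4,\Z)\to\GL(4,\Z_2)$ is surjective. This holds because $\GL(n,\Z_2)=\mathrm{SL}(n,\Z_2)$ is generated by elementary transvections $I+E_{ij}$, and each of these lifts to the integral transvection $I+E_{ij}\in\mathrm{SL}(n,\Z)$. Products of such lifts give a matrix $h\in\GL(4,\Z)$ with $h\equiv g_\alpha^{-1}\pmod 2$. We set $\lambda\coloneq h\circ\eta$ and check two things:
\begin{itemize}
    \item for each facet $\sigma$ of $Y$, the vectors $\lambda(\sigma)$ form $h$ applied to the unimodular basis $\eta(\sigma)$, so they form a basis of $\Z^4$;
    \item modulo~$2$ we have $\lambda\equiv g_\alpha^{-1}\circ g_\alpha\circ\lambda^\R=\lambda^\R$.
\end{itemize}
So $\lambda$ is a lift of $\lambda^\R$.

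The equivalent formulation is immediate. Since the $168$ blocks partition $\cF$, the set $\Sigma(\lambda^\R)$ meets fewer than $168$ of them exactly when it is disjoint from at least one block $B_\alpha$.
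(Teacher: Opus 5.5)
Your proposal is correct and matches the paper's proof essentially step for step. You transport by $g_\alpha$ to the standard bad block $B$, apply Lemma~\ref{lem:avoid-standard-bad-block}, and pull back via an integral lift from the surjectivity of $\GL(4,\Z)\to\GL(4,\Z_2)$; the paper lifts $g_\alpha$ and then inverts, whereas you lift $g_\alpha^{-1}$ directly, which amounts to the same thing.
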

\begin{proof}
    Choose $\alpha$ such that $\Sigma(\lambda^\R)\cap B_\alpha=\varnothing$.
    Since $g_\alpha(B_\alpha)=B$, we have
    $$
        \Sigma(g_\alpha\lambda^\R)\cap B=\varnothing.
    $$
    Let $\widetilde\lambda$ be the map obtained by taking the $\{0,1\}$-representative of each vector~$g_\alpha\lambda^\R(v)$.
    By Lemma~\ref{lem:avoid-standard-bad-block}, $\widetilde\lambda$ is a $\{0,1\}$-lift of $g_\alpha\lambda^\R$.
    
    The reduction map $\GL(4,\Z) \to \GL(4,\Z_2)$ is surjective, since $\GL(4,\Z_2)$ is generated by elementary matrices, each of which has an integral unimodular lift.
    Choose an integral lift $G_\alpha\in \GL(4,\Z)$ of $g_\alpha$, and define
    $$
        \lambda\coloneq G_\alpha^{-1}\widetilde\lambda.
    $$
    Then $\lambda$ is an integral characteristic map and
    $$
        \lambda=G_\alpha^{-1}\widetilde\lambda \equiv g_\alpha^{-1}(g_\alpha\lambda^\R)=\lambda^\R \pmod 2.
    $$
    Hence $\lambda$ is a lift of $\lambda^\R$.
\end{proof}

\begin{proposition}\label{prop:168}
    Let $Y$ be a pure $3$-dimensional simplicial complex, and let $\lambda^\R$ be a mod~$2$ characteristic map over $Y$.
    If $|\Sigma(\lambda^\R)|<168$, then $\lambda^\R$ admits a lift.
    In particular, if $K$ is a simplicial $3$-sphere with $f_3(K)<168$, then any mod~$2$ characteristic map over $K$ admits a lift.
\end{proposition}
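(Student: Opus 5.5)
The plan is a pigeonhole argument feeding into Lemma~\ref{lem:block-avoidance}. The blocks $B_1,\dots,B_{168}$ partition $\cF(X(\Z_2^4))$. So each element of $\Sigma(\lambda^\R)$ lies in exactly one block, and the number of blocks meeting $\Sigma(\lambda^\R)$ is at most $|\Sigma(\lambda^\R)|$. To make this precise, I would consider the map $\Sigma(\lambda^\R)\to\{1,\dots,168\}$ sending a facet image to the index of its block. Its image is exactly the set of indices $\alpha$ with $\Sigma(\lambda^\R)\cap B_\alpha\neq\varnothing$, so the size of that set is at most $|\Sigma(\lambda^\R)|<168$.

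Consequently some index $\alpha$ has $\Sigma(\lambda^\R)\cap B_\alpha=\varnothing$, and Lemma~\ref{lem:block-avoidance} gives a lift of $\lambda^\R$.

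For the second sentence, I would take $Y=K$, which is a pure $3$-dimensional simplicial complex. For any mod~$2$ characteristic map $\lambda^\R$ over $K$, the set $\Sigma(\lambda^\R)$ is the image of $\cF(K)$ under $\sigma\mapsto\widehat{\lambda^\R}(\sigma)$. Therefore $|\Sigma(\lambda^\R)|\le f_3(K)<168$, and the first part applies.

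There is no real obstacle here. The only point to state carefully is that distinct facets of $K$ may have the same image, which only helps the inequality, and that the partition property is what allows counting blocks by facet images.
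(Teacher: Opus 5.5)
Your proof is correct and is essentially the paper's argument. Pigeonhole over the $168$ blocks yields a block disjoint from $\Sigma(\lambda^\R)$, to which you apply Lemma~\ref{lem:block-avoidance}, and the second claim follows from $|\Sigma(\lambda^\R)|\le f_3(K)$.
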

\begin{proof}
    Since the bad-block partition has $168$ blocks and $|\Sigma(\lambda^\R)|<168$, the set~$\Sigma(\lambda^\R)$ is disjoint from at least one block.
    The result follows from Lemma~\ref{lem:block-avoidance}.
    The final assertion follows because $|\Sigma(\lambda^\R)|\le f_3(K)$.
\end{proof}

\section{Toric lifting properties for 3-spheres} \label{toric lifting property}

Based upon the bad-block avoidance criterion established in Section~\ref{universal cpx}, we first prove our  vertex bound theorem for general simplicial 3-spheres and derive a corollary for neighborly  simplicial 3-spheres.
\begin{theorem}\label{thm:main}
    Let $K$ be a simplicial $3$-sphere with at most $20$ vertices.
    Then $K$ has the toric lifting property.
\end{theorem}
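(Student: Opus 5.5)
Fix a simplicial $3$-sphere $K$ with $m\le 20$ vertices and a mod~$2$ characteristic map $\lambda^\R$ over $K$. By Lemma~\ref{lem:block-avoidance} it suffices to show that $\Sigma(\lambda^\R)$ misses some block $B_\alpha$ of the bad-block partition. The natural first bound is Proposition~\ref{prop:168}: by the upper bound theorem, a simplicial $3$-sphere with $m$ vertices has at most
$$
f_3(C_4(m)) = \frac{m(m-3)}{2}
$$
facets. For $m\le 19$ this gives $f_3(K)\le 152<168$, so $|\Sigma(\lambda^\R)|<168$ and we are done. The real content is therefore $m=20$, where $f_3$ can reach $170$, only slightly above $168$.

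For $m=20$ I would use structure rather than a pure count. First, if $f_3(K)<168$ we are again done, so assume $f_3(K)\in\{168,169,170\}$. Since $f_3=f_1-m$ for $3$-spheres (Dehn--Sommerville: $f_2=2f_3$ and $f_0-f_1+f_2-f_3=0$), this forces $f_1\ge 188$ out of $\binom{20}{2}=190$. So $K$ is neighborly or misses at most two edges. Second, note that $\Sigma(\lambda^\R)$ meets every block only if it has at least $168$ distinct facet images, one per block. Thus we need only at most two coincidences among facet images, or a small count of ``doubly hit'' blocks, to finish.

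The key step is to find such coincidences using the pigeonhole principle on vertex images. There are $20$ vertices but only $15$ nonzero vectors in $\Z_2^4$, so at least five pairs of vertices share an image. Two vertices $v,w$ with $\lambda^\R(v)=\lambda^\R(w)$ cannot be adjacent, because every edge lies in a facet whose image must be a basis. Hence $K$ has at least $5$ non-edges, contradicting $f_1\ge 188$, which allows at most $2$ non-edges. More precisely, if the fibers of $\widehat{\lambda^\R}$ have sizes $k_1,\dots,k_{15}$ with $\sum k_i=20$, the number of non-edges is at least $\sum\binom{k_i}{2}\ge 5$. So $f_1\le 185$ and $f_3\le 165<168$, and Proposition~\ref{prop:168} applies.

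This argument in fact handles all $m\le 20$ uniformly. In general $f_3=f_1-m\le \binom{m}{2}-\sum_i\binom{k_i}{2}-m$, and for $m=20$ the convexity minimum of $\sum\binom{k_i}{2}$ is $5$. I expect the main obstacle to be only bookkeeping: confirming the Dehn--Sommerville identity $f_3=f_1-f_0$ for simplicial $3$-spheres, and checking that the bound $\sum\binom{k_i}{2}\ge m-15$ yields $f_3<168$ across the range. At $m=20$ it gives $190-5-20=165$, and for $m\le 19$ the upper bound theorem alone suffices.
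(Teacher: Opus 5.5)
Your proposal is correct and follows essentially the same route as the paper. The upper bound theorem handles $m\le 19$ ($f_3\le 152$); for $m=20$, the pigeonhole count of at least five non-adjacent equal-image vertex pairs gives $f_1\le 185$, hence $f_3=f_1-20\le 165<168$, and Proposition~\ref{prop:168} applies (your preliminary reduction to $f_3\in\{168,169,170\}$ is harmless but unnecessary).
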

\begin{proof}
    Let $K$ be a simplicial $3$-sphere on $[m]$ with $m \leq 20$.
    Then, by the \emph{upper bound theorem}~\cite{Stanley1975}, the number $f_3(K)$ of facets is at most $\frac{m(m-3)}{2}$.
    In particular, if $m \leq 19$, then $f_3(K) \leq \frac{19 \cdot 16}{2} = 152$.
    Now suppose that $m=20$ and $K$ supports a mod~$2$ characteristic map, and let $\widehat{\lambda^\R} \colon K\to X\bigl(\Z_2^4\bigr)$ be the corresponding map.
    Since $\widehat{\lambda^\R}$ is nondegenerate, the endpoints of every edge of $K$ must be sent to distinct vertices of $X(\Z_2^4)$, and, hence, the $1$-skeleton of $K$ is $15$-colorable.
    
    Let $n_1,\dots,n_{15}$ be the color class sizes. 
    Then $n_1+\cdots+n_{15}=20$, and the number of edges satisfies
    $$
        f_1(K) \leq \binom{20}{2}-\sum_{i=1}^{15}\binom{n_i}{2}.
    $$
    Since $20$ vertices are distributed among $15$ classes, at least five pairs of vertices must lie in the same color class, and therefore
    $$
        \sum_{i=1}^{15}\binom{n_i}{2}\ge 5.
    $$
    Thus $f_1(K)\le \binom{20}{2}-5=185$.
    
    Now $K$ is a simplicial $3$-sphere, so every triangle lies in exactly two tetrahedra. 
    Hence $2f_2(K)=4f_3(K)$, that is, $f_2(K)=2f_3(K)$.
    Applying Euler's relation, we obtain
    $$
        20-f_1(K)+2f_3(K)-f_3(K)=0,
    $$
    and, hence, $f_3(K)=f_1(K)-20\le 185-20=165$.
    
    Hence, by Proposition~\ref{prop:168}, $K$ has the toric lifting property.
\end{proof}

A simplicial complex $K$ is called \emph{neighborly} if every pair of distinct vertices of~$K$ spans an edge of~$K$.

\begin{corollary}\label{cor:neighborly}
    Every neighborly simplicial $3$-sphere has the toric lifting property.  
\end{corollary}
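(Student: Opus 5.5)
We split into two cases according to whether $K$ supports a mod~$2$ characteristic map at all. If $K$ admits no mod~$2$ characteristic map, the toric lifting property holds vacuously. Otherwise, fix a mod~$2$ characteristic map $\lambda^\R$ over $K$ and the associated nondegenerate simplicial map $\widehat{\lambda^\R}\colon K\to X(\Z_2^4)$.

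The key observation is that nondegeneracy forces the endpoints of every edge of $K$ to go to distinct vertices of $X(\Z_2^4)$. Since $K$ is neighborly, every pair of distinct vertices of $K$ spans an edge. Hence $\widehat{\lambda^\R}$ is injective on vertices, and so $K$ has at most $15$ vertices, because $|\cV|=15$.

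It then suffices to invoke Theorem~\ref{thm:main}: a simplicial $3$-sphere with $m\le 15\le 20$ vertices has the toric lifting property. Alternatively, we can argue directly. By the upper bound theorem, $f_3(K)\le \frac{m(m-3)}{2}\le \frac{15\cdot 12}{2}=90<168$, so Proposition~\ref{prop:168} applies.

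There is no real obstacle here. The only point requiring care is the injectivity argument, namely that a nondegenerate simplicial map sends the two endpoints of each edge to distinct vertices. The rest is immediate from the earlier results.
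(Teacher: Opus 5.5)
Your proof is correct and follows the paper's argument: neighborliness plus nondegeneracy forces $\widehat{\lambda^\R}$ to be injective on vertices, so $m\le 15$ and Theorem~\ref{thm:main} applies. Your alternative via the upper bound theorem ($f_3(K)\le 90<168$, then Proposition~\ref{prop:168}) is just the $m\le 19$ branch of that theorem's proof, and the paper covers your vacuous case implicitly by taking $\lambda^\R$ to be an arbitrary mod~$2$ characteristic map.
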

\begin{proof} 
    Let $K$ be a neighborly simplicial $3$-sphere on $[m]$.
    Let $\lambda^\R$ be an arbitrary mod~$2$ characteristic map over~$K$, and let $\widehat{\lambda^{\R}}\colon K\to X(\Z_2^4)$ be the associated nondegenerate simplicial map.
    Since $K$ is neighborly, $\widetilde{\lambda^{\R}}$ is injective on vertices.
    Hence $m\leq 15$, and Theorem~\ref{thm:main} implies that $\lambda^\R$ admits a lift.
\end{proof}
\begin{remark} 
    The lifting problem for neighborly simplicial~$3$-spheres was studied in \cite{Baralic-Milenkovic2022}.
    That work treats the polytopal case and verifies the lifting conjecture for neighborly simple $4$-polytopes with at most $12$ facets through a computer-assisted classification. 
    By contrast, Corollary~\ref{cor:neighborly} gives a uniform proof for all neighborly simplicial~$3$-spheres.
\end{remark}

Using Zheng’s upper bound theorem on flag 3-manifolds~\cite{Zheng2017}, we obtain a weaker restriction on the vertex bound for flag simplicial 3-spheres.
A simplicial complex is called \emph{flag} if any set of vertices of $K$ which are pairwise connected by edges spans a simplex of~$K$.

\begin{theorem}\label{thm:flag25}
    Let $K$ be a flag simplicial $3$-sphere on $[m]$ with $m\le 25$.
    Then $K$ has the toric lifting property.
\end{theorem}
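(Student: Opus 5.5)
The plan is to argue exactly as in the proof of Theorem~\ref{thm:main}: bound the number of facets $f_3(K)$ strictly below $168$ and then invoke Proposition~\ref{prop:168}. For flag spheres the McMotzkin/Stanley upper bound is far from sharp. I will replace it with Zheng's upper bound theorem for flag $3$-manifolds~\cite{Zheng2017}, which confirms the Lutz--Nevo conjecture in dimension~$3$. In the form I intend to use, it states that a flag simplicial $3$-manifold (in particular a flag $3$-sphere) on $m$ vertices satisfies
$$
    f_1(K)\le \Bigl\lfloor \tfrac{m^2}{4}\Bigr\rfloor + m .
$$
Equality is attained by the join of two cycles $C_a * C_b$ with $a+b=m$ and $a,b$ as equal as possible. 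That join has $a+b+ab$ edges and $ab$ facets.

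Next I will convert the edge bound into a facet bound via the Dehn--Sommerville/Euler relations already used in the proof of Theorem~\ref{thm:main}. Since $K$ is a simplicial $3$-sphere, every triangle lies in exactly two tetrahedra, so $f_2(K)=2f_3(K)$. Euler's relation $m-f_1+f_2-f_3=0$ then gives $f_3(K)=f_1(K)-m$. Combining this with Zheng's bound yields
$$
    f_3(K)\le \Bigl\lfloor \tfrac{m^2}{4}\Bigr\rfloor .
$$
For $m\le 25$ this gives $f_3(K)\le \lfloor 625/4\rfloor = 156<168$. Since $|\Sigma(\lambda^\R)|\le f_3(K)$ for every mod~$2$ characteristic map $\lambda^\R$ over $K$, Proposition~\ref{prop:168} shows that every such $\lambda^\R$ admits a lift. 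Hence $K$ has the toric lifting property.

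The only step I expect to need care is quoting Zheng's theorem in precisely the right form: an edge bound valid for all flag $3$-spheres, with the exact constant $\lfloor m^2/4\rfloor+m$. If it is instead stated via the $\gamma$-vector, I would translate it using $f_3=f_1-m$ and the $h$-vector relations for $3$-spheres. As a sanity check, $m=26$ gives $\lfloor 26^2/4\rfloor=169\ge168$, so the threshold $25$ is exactly where this counting argument stops working. This matches the statement.
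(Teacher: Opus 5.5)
Your proposal is correct and follows essentially the same route as the paper. You combine Zheng's flag edge bound $f_1(K)\le\lfloor m^2/4\rfloor+m$ with $f_3(K)=f_1(K)-m$, check that $\lfloor 25^2/4\rfloor=156<168$, and conclude via Proposition~\ref{prop:168}.
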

\begin{proof}
    Since $K$ is a flag simplicial $3$-sphere, it is in particular a flag $3$-manifold.
    By the flag upper bound theorem of Zheng~\cite{Zheng2017}, among all flag $3$-manifolds on $m$ vertices, we have
    $$
        f_1(K)\leq \Bigl\lfloor \frac{m^2}{4}\Bigr\rfloor + m.
    $$
    Since $K$ is a simplicial $3$-sphere, we have $f_3(K)=f_1(K)-f_0(K)=f_1(K)-m$.
    Therefore
    $$
        f_3(K)\le \Bigl\lfloor \frac{m^2}{4}\Bigr\rfloor.
    $$
    If $m\le 25$, then
    $$
        f_3(K)\le \Bigl\lfloor \frac{25^2}{4}\Bigr\rfloor = 156 < 168.
    $$
    Hence Proposition~\ref{prop:168} applies, and every mod~$2$ characteristic map on $K$ admits a lift.
\end{proof}

We now study liftability controlled by the size of the vertex image of the characteristic map.
For a subset $S\subset \cV$, we write $X[S]\coloneq\{\sigma\in X(\Z_2^4)\mid \sigma\subset S\}$ for the induced subcomplex on $S$.
Let $K$ be a simplicial $3$-sphere.
Recall that there is a mod~$2$ characteristic map $\lambda^\R$ over $K$ if and only if there is a nondegenerate simplicial map $\widehat{\lambda^\R}\colon K \to X(\Z_2^4)$.
Moreover, $\lambda^\R$ has a lift whenever the inclusion~$X[ \im \lambda^\R] \hookrightarrow X(\Z_2^4)$ admits a lift 
$$
    \begin{tikzcd}
        &  & X(\Z^4) \arrow[d, "\bmod 2"] \\
        K \arrow[r, "\widehat{\lambda^{\R}}"'] & X[ \im \lambda^\R] \arrow[ur, dashed] \arrow[r, hook] & X (\Z_2^4),
    \end{tikzcd}
$$
where $X(\Z^4)$ denotes the universal simplicial complex of unimodular subsets of primitive vectors in $\Z^4$.

\begin{lemma}\label{lem:qle13}
    If $S\subset \cV$ satisfies $|S|\leq 13$, then the inclusion $X[S]\hookrightarrow X(\Z_2^4)$ admits a lift to a nondegenerate simplicial map
    $$
        X[S]\longrightarrow X(\Z^4).
    $$
\end{lemma}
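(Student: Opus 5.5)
Since enlarging $S$ only adds facets to $X[S]$, and a lift of $X[S']\hookrightarrow X(\Z_2^4)$ restricts to a lift of $X[S]\hookrightarrow X(\Z_2^4)$ whenever $S\subset S'$, it is enough to treat $|S|=13$. In that case $S=\cV\setminus\{x,y\}$ for two distinct nonzero vectors $x,y$, and $x+y$ is a third nonzero vector. The group $\GL(4,\Z_2)$ acts transitively on ordered pairs of distinct nonzero vectors. So, up to this action, there is exactly one such $S$.

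The plan is to apply Lemma~\ref{lem:block-avoidance} with $Y=X[S]$ and $\lambda^\R$ the inclusion. That lemma, and the argument in its proof, only needs some block $B_\alpha$ of the bad-block partition with no facet contained in $S$. Equivalently, we need a block $P(\tau)$ each of whose five facets meets $\{x,y\}$. Transporting by $g_\alpha$ and a lift $G_\alpha\in\GL(4,\Z)$, exactly as in the proof of Lemma~\ref{lem:block-avoidance}, then produces a lift of the inclusion. Nondegeneracy is automatic, because every facet of $X[S]$ is sent to a unimodular basis.

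To find such a block, I would take $\tau=\{u,p,q,r\}$ with $u=x$. This puts $x$ in the first two facets $\{u,p,q,r\}$ and $\{u,p+u,q+u,r+u\}$. The remaining three facets are $\{u+p,p,q+p,r+p\}$, $\{u+q,p+q,q,r+q\}$ and $\{u+r,p+r,q+r,r\}$. Now choose $p=x+y$, so that $u+p=y$ and $y$ lies in the third facet. For the fourth facet, we need $y$ to be one of $u+q$, $p+q$, $q$, $r+q$. The choices $q=y$ and $q=x+y$ are both impossible, since $q$ must be independent of $u$ and $p$. The choice $u+q=y$ forces $q=p$, also impossible. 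So we need $r+q=y$, that is, $r=q+y$. The same analysis for the fifth facet requires $q+r=y$, which is the same condition. Finally we must check that $\{x,\,x+y,\,q,\,q+y\}$ is a basis. It spans $x,y,q$ together with $q+y$, and $q+y$ lies in $\Span\{x,y,q\}$, so this set is dependent. Hence this choice fails, and this is the main obstacle: the combinatorial search for a block in which every facet meets $\{x,y\}$.

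As a fallback, I would consider blocks where $u$ is neither $x$ nor $y$. One natural option is $u=x+y$, $p=x$, which makes $u+p=y$. Another is to let $x$ and $y$ appear as sums $p+q$ and $q+r$. Concretely, I would reduce by the transitive action to a fixed pair, say $x=a_1$, $y=a_2$, and search the $168$ blocks for one whose five facets each contain $a_1$ or $a_2$. Since the five facets together have $20$ vertex slots and each vertex lies in at most a few of them, a covering by two vertices is tight. If no single block works, I would instead look for some $g\in\GL(4,\Z_2)$ for which $X[gS]$ avoids the standard bad block $B$ directly, which is the same condition rephrased. Should that also fail, I would fall back on constructing the lift by hand, adjusting signs of entries (for example replacing some $1$ by $-1$) on the vertices of $S$ so that the five facets of $B$ lying in $S$ become unimodular.
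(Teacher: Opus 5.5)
\textbf{There is a genuine gap.} Your main route cannot work for any $13$-element $S$, and the only fallback that could work is left unexecuted.

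\textbf{Why the block-avoidance route fails.} The difficulty is structural, not a matter of search. Take a block $P(\tau)$ with $\tau=\{u_1,u_2,u_3,u_4\}$. It has exactly ten vertices, and each lies in exactly two of its five facets:
\begin{itemize}
\item $u_i$ lies only in $\tau$ and in the $i$-th facet $\{u_i\}\cup\{u_j+u_i\mid j\ne i\}$;
\item $u_i+u_j$ lies only in the $i$-th and $j$-th facets.
\end{itemize}
Linear independence rules out any other coincidence. So the facets and vertices of a block form the node--edge incidence of $K_5$. You can check this on $B$: $a_3$ lies in the first two facets, $a_5$ in the first and third, and so on. Consequently two removed vertices $x,y$ meet at most four of the five facets of any block. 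For every $13$-element $S$, the complex $X[S]$ therefore contains a facet of each of the $168$ blocks.

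This has three consequences. Lemma~\ref{lem:block-avoidance} never applies when $|S|=13$. Your search over blocks must fail, and your failed attempt with $u=x$ is just one instance of this. Your second fallback, finding $g$ with $X[gS]$ avoiding $B$, is the same condition and fails for the same reason. In particular, taking $\{0,1\}$-representatives after applying some $g\in\GL(4,\Z_2)$ never gives a lift here. Also, $B$ does not have five facets inside $S$; at most four of them can lie there.

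\textbf{What is actually needed.} You need a genuinely non-$\{0,1\}$ integral lift, which is your last fallback. That construction is the entire content of the lemma, and the proposal does not supply it. The paper does exactly this for $S=\cV\setminus\{a_5,a_{14}\}$. It writes down an explicit $\{0,\pm1\}$ matrix $L$, verifies that every facet of $X[S]$ has determinant $\pm1$, and then transports by $\GL(4,\Z_2)$ and integral lifts, as in your reduction.

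Up to column signs, $L$ is the $\{0,1\}$-representative lift with only the column of $a_{10}$ replaced by $(1,0,-1,0)^{T}$. This choice is natural. The only facets of $B$ inside this $S$ are $\{a_3,a_6,a_{10},a_{13}\}$ and $\{a_7,a_9,a_{10},a_{12}\}$, and they share $a_{10}$. In the $K_5$ picture, $a_5$ and $a_{14}$ are two edges at a common node, and $a_{10}$ is the edge joining the two uncovered nodes. Facets of $X[S]$ avoiding $a_{10}$ are then unimodular by the argument of Lemma~\ref{lem:avoid-standard-bad-block}. The facets of $X[S]$ through $a_{10}$ still have to be checked one by one. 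Without an explicit modification of this kind and its verification, your proposal does not prove the lemma.
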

\begin{proof}
    Up to the action of $\GL(4,\Z_2)$, there is exactly one orbit of $13$-element subsets of $\cV$, represented by
    $$
        S=\cV\setminus\{a_5,a_{14}\}.
    $$
    Order the vertices of $S$ as
    $$
        (a_1,a_2,a_3,a_4,a_6,a_7,a_8,a_9,a_{10},a_{11},a_{12},a_{13},a_{15}),
    $$
    and consider the integer matrix
    $$
        L=\left(
            \begin{array}{ccccccccccccc}
                0&0&0&0&0&0&1&-1&-1&-1&-1&-1&-1\\
                0&0&0&1&-1&-1&0&0&0&0&-1&-1&-1\\
                0&1&-1&0&-1&-1&0&0&1&-1&0&0&-1\\
                1&0&-1&0&0&-1&0&-1&0&-1&0&-1&-1
            \end{array}\right).
    $$
    Each column reduces mod~$2$ to the corresponding vertex of $S$, and a direct determinant computation shows that if $\{a_{i_1}, a_{i_2}, a_{i_3}, a_{i_4}\}\in \cF(X[S])$, the matrix formed by the four corresponding columns of~$L$ has determinant~$\pm1$. 
    Hence $L$ defines a lift of the inclusion~$X[S] \hookrightarrow X(\Z_2^4)$.

    Now let $S^\prime\subset \cV$ with $|S^\prime|\le 13$.
    Choose a $13$-element subset $S^{\prime\prime}$ such that $S^\prime\subset S^{\prime\prime}$.
    Since all $13$-element subsets lie in the same $\GL(4,\Z_2)$-orbit, there exists $g\in \GL(4,\Z_2)$ such that
    $$
        g(S^{\prime\prime})=S.
    $$
    Then $g(S^\prime)\subset S$, so the lift of $X[S]$ restricts to a lift of $X[g(S^\prime)]$.
    
    Choose an integral lift $G \in \GL(4,\Z)$ of $g$.
    Composing with $G^{-1}$, we obtain a lift of $X[S^\prime] \hookrightarrow X(\Z_2^4)$.
\end{proof}

The following theorem is an immediate consequence of Lemma~\ref{lem:qle13}.
\begin{theorem}\label{thm:image-size-13}
    Let $K$ be a simplicial $3$-sphere, and let $\lambda^\R$ be a mod~$2$ characteristic map over $K$.
    If $|\im \lambda^\R| \leq 13$, then $\lambda^\R$ has a lift.
\end{theorem}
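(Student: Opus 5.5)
The plan is to factor $\widehat{\lambda^\R}$ through the induced subcomplex on its image and then apply Lemma~\ref{lem:qle13}. Set $S\coloneq\im\lambda^\R\subset\cV$, so that $|S|\le 13$ by hypothesis. For every facet $\sigma$ of $K$, the image $\widehat{\lambda^\R}(\sigma)$ is a facet of $X(\Z_2^4)$ whose vertices all lie in $S$. Hence $\widehat{\lambda^\R}(\sigma)\in X[S]$, and more generally every face of $K$ maps into $X[S]$. Therefore $\widehat{\lambda^\R}$ factors as a nondegenerate simplicial map $K\to X[S]$ followed by the inclusion $X[S]\hookrightarrow X(\Z_2^4)$, which is exactly the diagram displayed before Lemma~\ref{lem:qle13}.

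Next, Lemma~\ref{lem:qle13} supplies a lift $\mu\colon X[S]\to X(\Z^4)$ of this inclusion. On vertices, $\mu$ assigns to each $s\in S$ a primitive integer vector $\mu(s)$ with $\mu(s)\equiv s \pmod 2$. The condition that matters is that every facet $\{s_1,\dots,s_4\}$ of $X[S]$ is sent to a unimodular basis. Define
$$
    \lambda\coloneq \mu\circ\widehat{\lambda^\R}\colon [m]\to\Z^4 .
$$

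It remains to verify two properties of $\lambda$.
\begin{enumerate}
    \item \emph{Reduction.} For each vertex $v$ we have $\lambda(v)\equiv\widehat{\lambda^\R}(v)=\lambda^\R(v)\pmod 2$.
    \item \emph{Unimodularity.} Let $\sigma=\{i_1,\dots,i_4\}$ be a facet of $K$. Then $\widehat{\lambda^\R}(\sigma)$ consists of four distinct vertices, by nondegeneracy, and it is a facet of $X[S]$. Consequently $\{\lambda(i_1),\dots,\lambda(i_4)\}$ is the image under $\mu$ of a facet of $X[S]$, which is a basis of $\Z^4$.
\end{enumerate}
Thus $\lambda$ is an integral characteristic map lifting $\lambda^\R$.

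No real obstacle is expected, since all the computational work is contained in Lemma~\ref{lem:qle13}. The one point needing care is that the lift in that lemma is a statement about the vertex assignment $L$ being unimodular on every facet of $X[S]$. This is precisely the property used in the unimodularity check, including the reduction, via $\GL(4,\Z_2)$ and an integral lift $G$, to arbitrary $S$ with $|S|\le 13$.
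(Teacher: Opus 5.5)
Your proposal is correct and follows the paper's proof: you factor $\widehat{\lambda^\R}$ through $X[\im\lambda^\R]$, invoke Lemma~\ref{lem:qle13} to lift the inclusion, and compose. Your explicit checks of the mod~$2$ reduction and of unimodularity on facets spell out what the paper's three-line proof leaves implicit.
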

\begin{proof}
    The corresponding nondegenerate simplicial map factors through the induced subcomplex $X[\im \lambda^\R]$.
    Since $|\im \lambda^\R|\leq 13$, Lemma~\ref{lem:qle13} gives a lift of the inclusion $X[\im \lambda^\R]\hookrightarrow X(\Z_2^4)$.
    Composing with this lift gives a lift of $\lambda^\R$.
\end{proof}

Let $K$ be a simplicial $3$-sphere.
A mod~$2$ characteristic map~$\lambda^\R$ is called \emph{orientable} if, after applying a suitable element of
$\GL(4,\Z_2)$, every value of $\lambda^{\R}$ has odd coordinate sum.
It is known from~\cite{Nakayama-Nishimura2005} that an orientable characteristic map corresponds to an orientable small cover.
 
\begin{corollary}\label{cor:orientable-small-cover}
    Every orientable mod~$2$ characteristic map over a simplicial $3$-sphere admits a lift.
\end{corollary}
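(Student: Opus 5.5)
The plan is to reduce to Theorem~\ref{thm:image-size-13} by counting vertices of odd coordinate sum. Let $\lambda^\R$ be orientable. By definition there is $g\in\GL(4,\Z_2)$ such that every value of $g\lambda^\R$ has odd coordinate sum. Liftability is invariant under the $\GL(4,\Z_2)$-action. Indeed, if $\lambda$ lifts $g\lambda^\R$ and $G\in\GL(4,\Z)$ is an integral lift of $g$, which exists by the surjectivity of reduction used in the proof of Lemma~\ref{lem:block-avoidance}, then $G^{-1}\lambda$ is an integral characteristic map reducing to $\lambda^\R$. So it suffices to treat $g\lambda^\R$, and we may assume every value of $\lambda^\R$ has odd coordinate sum.

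The key step is to count the nonzero vectors of $\Z_2^4$ with odd coordinate sum. These are the vectors of Hamming weight $1$ or $3$, so there are $\binom{4}{1}+\binom{4}{3}=8$ of them. Equivalently, they form the complement of the hyperplane $\{x_1+x_2+x_3+x_4=0\}$, which has $16-8=8$ elements. Hence $|\im\lambda^\R|\le 8\le 13$. Theorem~\ref{thm:image-size-13} then gives a lift of $g\lambda^\R$, and transporting it back by $G^{-1}$ gives a lift of $\lambda^\R$.

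There is essentially no obstacle. The only points to state carefully are the $\GL$-equivariance of liftability and that Theorem~\ref{thm:image-size-13} only needs the image to have size at most $13$. Alternatively, one could avoid Theorem~\ref{thm:image-size-13} and try to show directly that the $\{0,1\}$-representatives give a lift, i.e.\ that no facet of the standard bad block $B$ consists entirely of odd-weight vectors. This fails: $\{a_7,a_{11},a_{13},a_{14}\}$ consists of the weight-$3$ vectors and has determinant $\pm3$. So the route through Theorem~\ref{thm:image-size-13} is the one to take.
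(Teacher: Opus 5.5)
Your proof is correct and is the paper's argument: normalize by $g$, note that the image lies among the $8$ odd-weight vectors, apply Theorem~\ref{thm:image-size-13}, and pull the lift back by an integral lift of $g^{-1}$. Your closing remark is also right: $\{a_7,a_{11},a_{13},a_{14}\}\in B$ consists of odd-weight vectors and has determinant $\pm 3$, so the naive $\{0,1\}$-representative shortcut would indeed fail.
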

\begin{proof}
    After applying a suitable element of $\GL(4,\Z_2)$, we may assume that
    $$
        \im(\lambda^{\R})\subset \{x\in (\Z_2)^4 \mid x_1+x_2+x_3+x_4=1\}.
    $$
    Hence $|\im(\lambda^{\R})|\le 8$, so Theorem~\ref{thm:image-size-13} gives a lift after this basis change.
    Composing with an integral lift of the inverse basis change gives a lift of the original $\lambda^\R$.
\end{proof}

\section{Join complexes} \label{join complex}

We next apply our bad-block obstruction criterion to simplicial 3-spheres that decompose as simplicial joins, establishing universal liftability for all such joined 3-sphere constructions.

Let $K_1$ and $K_2$ be simplicial complexes on $\cV_1$ and $\cV_2$, respectively. The \emph{join} of $K_1$ and $K_2$ is the simplicial complex 
	$$
	    K_1*K_2 \coloneq \{\tau\subset \cV_1\sqcup \cV_2 \mid \tau = \tau_1\cup \tau_2, \tau_1\in K_1, \tau_2\in K_2\}
	$$
on the set $\cV_1\sqcup \cV_2$.

\begin{lemma}\label{lem:join-1-1}
    Let $K$ and $L$ be pure $1$-dimensional simplicial complexes.
    Then every mod~$2$ characteristic map over $K\ast L$ admits a lift.
\end{lemma}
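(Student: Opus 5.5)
The plan is to reduce to the two tools already available, Theorem~\ref{thm:image-size-13} and the bad-block avoidance criterion (Lemma~\ref{lem:block-avoidance}). Both apply to any pure $3$-dimensional complex, which is what $K\ast L$ is. Write $V_K\subset\cV$ for the set of images $\lambda^\R(v)$ of vertices $v$ of $K$, and $V_L$ for the images of vertices of $L$. Every facet of $K\ast L$ has the form $\{a,b\}\sqcup\{c,d\}$ with $\{a,b\}$ an edge of $K$ and $\{c,d\}$ an edge of $L$. So the characteristic condition says: for every $K$-edge with images $\{x,y\}$ and every $L$-edge with images $\{z,w\}$, the four vectors $x,y,z,w$ form a basis of $\Z_2^4$. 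Equivalently, $\Span\{x,y\}\cap\Span\{z,w\}=0$.

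The first step is to extract combinatorial constraints from this. Since $K$ and $L$ are pure, every vertex lies in an edge. Taking any $K$-edge through $a$ and any $L$-edge through $c$ gives $\lambda^\R(a)\neq\lambda^\R(c)$, so $V_K\cap V_L=\varnothing$. Moreover, for any $K$-edge $\{x,y\}$ and any $L$-edge $\{z,w\}$, we get $x+y\notin V_L\cup\{z+w\}$ and $z+w\notin V_K$. Let $E_K=\{x+y\}$ be the set of sums over $K$-edges and $E_L$ the analogous set for $L$. Then $E_K\cap V_L=E_L\cap V_K=E_K\cap E_L=\varnothing$.

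If $|V_K\cup V_L|\le 13$, Theorem~\ref{thm:image-size-13} finishes the proof, so we may assume $|V_K\cup V_L|\ge 14$. Then at most one nonzero vector lies outside $V_K\sqcup V_L$. Since $E_K\cap V_L=\varnothing$, $E_L\cap V_K=\varnothing$ and $E_K\cap E_L=\varnothing$, this forces almost all edge-sums of $K$ into $V_K$ and almost all edge-sums of $L$ into $V_L$. I would push this to a rigid structure: essentially $V_K$ and $V_L$ must be nearly closed under the edge-sums, which should pin down the possible configurations up to $\GL(4,\Z_2)$. A natural candidate is $V_K\cup\{0\}$ and $V_L\cup\{0\}$ being complementary-like subsets with a single exceptional vector.

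In this remaining case the plan is to exhibit a block $B_\alpha=P(\tau)$ disjoint from $\Sigma(\lambda^\R)$ and then apply Lemma~\ref{lem:block-avoidance}. The idea is to choose $\tau=\{u,p,q,r\}$ so that at least one of the five facets of $P(\tau)$ fails to split as $2+2$ into a $V_K$-pair plus a $V_L$-pair. Next, check that the four remaining facets $\{u_i\}\cup\{u_j+u_i\}$ also cannot be of split form, or that any split form would force some $x+y\in V_L$ or $z+w\in V_K$, contradicting the constraints above. The natural choice is $\tau$ with three vectors in one side and one in the other, say $u,p,q\in V_K$ and $r\in V_L$. Such a facet is not split, and its translates $\{u_i\}\cup\{u_j+u_i\}$ mix sides in a way controlled by the exclusions on $E_K$ and $E_L$.

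The main obstacle is this last step: verifying, in the near-saturated case $|V_K\cup V_L|\ge 14$, that such a $\tau$ exists for which none of the five facets of $P(\tau)$ lies in $\Sigma(\lambda^\R)$. I would first try to prove the structural classification carefully, then finish with a short case check, by hand or by the same kind of finite computation used for Lemma~\ref{lem:qle13}. If that check becomes unwieldy, the fallback is to write down an explicit integral lift in the normalized coordinates, in the spirit of the matrix $L$ in Lemma~\ref{lem:qle13}.
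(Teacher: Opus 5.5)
Your preliminary observations are correct: $V_K\cap V_L=\varnothing$, and $E_K$ misses $V_L\cup E_L$. The case $|V_K\cup V_L|\le 13$ is also handled, although for a general pure complex $K\ast L$ you should invoke Lemma~\ref{lem:qle13} directly, since Theorem~\ref{thm:image-size-13} is stated for $3$-spheres.

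There is a genuine gap, however. The remaining case $|V_K\cup V_L|\ge 14$ is the whole content of the lemma, and it really occurs. For example, let $K$ be a triangle on $V_1\setminus\{0\}$ for a $2$-plane $V_1$, and let $L$ be the complete $3$-partite graph on the three nontrivial cosets of $V_1$; the image is then all $15$ vectors. For this case you give only a hoped-for classification and an unverified choice of $\tau$, so there is no proof.

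The template you suggest also fails as stated. In the example just given, no three vectors of $V_K$ are independent, so the roles must at least be swapped. Worse, take $V_1=\Span\{e_1,e_2\}$ and $V_2=\Span\{e_3,e_4\}$. Let $K$ be the two triangles on $V_1\setminus\{0\}$ and $V_2\setminus\{0\}$. Let $L$ be the graph on the other nine vectors joining $z,w$ whenever $\Span\{z,w\}$ meets $V_1$ and $V_2$ trivially. Then $\tau=\{e_1,e_2,e_3,e_2+e_4\}$ fits your pattern: three vertices in $V_K$, one in $V_L$. Yet $\{e_1,e_1+e_2,e_1+e_3,e_1+e_2+e_4\}\in P(\tau)$ is a $K$-edge image plus an $L$-edge image, so it lies in $\Sigma(\lambda^\R)$. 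The avoided block therefore has to be found configuration by configuration, and you have not shown that the configurations are few or how to treat each.

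The missing idea is to bound $|\Sigma(\lambda^\R)|$ directly instead of classifying. Record the plane $\Span(\lambda^\R(\tau_1))$ of each $K$-edge and the plane of each $L$-edge. This gives families $S$ and $T$ of $2$-planes with $V\cap W=0$ for all $V\in S$, $W\in T$. Identifying complements of $V_1$ with sections of $\Z_2^4\to\Z_2^4/V_1$ shows that at most $16$, $8$, $4$ planes are complementary to one, two, three given planes, respectively. This yields $|S||T|\le 16$. Each pair $(V,W)$ contributes at most $3\cdot 3=9$ facet images, so $|\Sigma(\lambda^\R)|\le 144<168$, and Proposition~\ref{prop:168} finishes with no case split on the image size.
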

\begin{proof}
    Let $\lambda^\R$ be a mod~$2$ characteristic map over $K\ast L$.
    A facet of $K\ast L$ is of the form $\tau_1\cup\tau_2$, where $\tau_1$ is an edge of $K$ and $\tau_2$ is an edge of $L$.
    Set
    $$
        S\coloneq\{\Span(\lambda^\R(\tau_1))\mid \tau_1\in\cF(K)\},\quad \text{ and } \quad T\coloneq\{\Span(\lambda^\R(\tau_2))\mid \tau_2\in\cF(L)\}.
    $$
    Each element of $S$ and $T$ is a $2$-dimensional subspace of $\Z_2^4$.
    Moreover, if $V\in S$ and $W\in T$, then $V\cap W=0$, because the union of the corresponding two edges is a facet of $K\ast L$.

    For a family $\cU$ of $2$-dimensional subspaces of $\Z_2^4$, write
    $$
        \overline{\cU}\coloneq\{W\le \Z_2^4\mid \dim W=2,\ V\cap W=0\text{ for all }V\in\cU\}.
    $$
    We claim that if $|\cU|\ge 1$, then $|\overline{\cU}|\le16$; if $|\cU|\ge2$, then $|\overline{\cU}|\le8$; and if $|\cU|\ge3$, then $|\overline{\cU}|\le4$.

    To prove the claim, choose $V_1\in\cU$ and set $U\coloneq \Z_2^4/V_1$.
    A $2$-dimensional subspace~$W$ satisfies $V_1\cap W=0$ if and only if the quotient projection $\pi\colon\Z_2^4\to U$ restricts to an isomorphism $W\to U$.
    Equivalently, $W$ is the image of a unique section $s_W\colon U\to\Z_2^4$ of $\pi$.
    Since $\dim U=2$ and each coset of $V_1$ has four elements, there are at most $4\cdot4=16$ such sections.
    Hence $|\overline{\cU}|\le16$ when $|\cU|\ge1$.

    Now suppose $|\cU|\ge2$, and choose $V_2\in\cU$ with $V_2\ne V_1$.
    If $V_1\cap V_2=0$, then $V_2$ is the image of another section $s_2\colon U\to\Z_2^4$.
    The condition $V_2\cap W=0$ is equivalent to saying that $s_W+s_2\colon U\to V_1$ is injective, hence an isomorphism.
    Thus there are at most $|\GL(2,\Z_2)|=6$ possibilities for $s_W + s_2$, and also for $W$.
    If $V_1\cap V_2\ne0$, choose $u_2\in V_2\setminus V_1$ and put $\bar u_2\coloneq\pi(u_2)$.
    Then $\pi(V_2)=\{0,\bar u_2\}$, and the coset $V_1+u_2$ contains exactly two vectors of $V_2$.
    The condition $V_2\cap W=0$ is equivalent to $s_W(\bar u_2)\notin V_2$.
    There are two possible choices for $s_W(\bar u_2)$ and four choices for the value of $s_W$ on a complementary basis vector of $U$.
    Hence there are at most $2\cdot4=8$ possibilities for $W$.
    Therefore $|\overline{\cU}|\le8$ when $|\cU|\ge2$.

    Finally suppose $|\cU|\ge3$, and choose distinct elements $V_1,V_2,V_3\in\cU$.
    If $V_1\cap V_2=0$, then, as above, $s_W+s_2\colon U\to V_1$ must be an isomorphism.
    Since $V_3$ is distinct from both $V_1$ and $V_2$, $|(V_1\cap V_3)\cup (V_2\cap V_3)| = |V_1\cap V_3| + |V_2\cap V_3| - |V_1\cap V_2\cap V_3|\leq 2+2-1 = 3$. But $|V_3| = 4$, so there is a vector $z\in V_3$ but $z\notin V_1\cup V_2$. Then the components of $z$ with respect to $\Z_2^4=V_1\oplus V_2$ are both nonzero.
    Write $z=z_1+z_2$ with $0\ne z_1\in V_1$ and $0\ne z_2\in V_2$.
    Since $V_3\cap W=0$, $s_W+s_2$ cannot send $\pi(z)$ to~$z_1$.
    Exactly two elements of $\GL(2,\Z_2)$ send the nonzero vector $\pi(z)$ to $z_1$, so there are at most $6-2=4$ possibilities for $W$.

    It remains to consider the case in which $V_i\cap V_j\ne0$ for all distinct $i,j\in\{1,2,3\}$.
    Choose $u_i\in V_i\setminus V_1$ and put $\bar u_i\coloneq\pi(u_i)$ for $i=2,3$.
    As above, the conditions $V_i\cap W=0$ are equivalent to $s_W(\bar u_i)\notin V_i$ for $i=2,3$.
    If $\bar u_2\ne\bar u_3$, then the two vectors $\bar u_2,\bar u_3$ form a basis of $U$, and each condition leaves at most two choices.
    Thus there are at most $2\cdot2=4$ sections.
    If $\bar u_2=\bar u_3$, then $s_W(\bar u_2)$ must avoid both~$V_2$ and $V_3$ inside the four-element coset $V_1+u_2$.
    The two intersections $(V_1+u_2)\cap V_2$ and $(V_1+u_2)\cap V_3$ are distinct two-element subsets, and hence their union has at least three elements.
    Thus there is at most one choice for $s_W(\bar u_2)$, and then at most four choices for the value on a complementary basis vector.
    Again there are at most four sections.
    This proves the claim.

    Obviously, $T\subset\overline S$ and $S\subset\overline T$.
    Applying the claim to $T$, since $T$ is nonempty, we have $|S|\le16$.
    Hence $|S||T|\le16$: indeed, if $|S|=1$, then $|T|\le16$; if $|S|=2$, then $|T|\le8$; if $3\le |S|\le4$, then $|T|\le4$; if $5\le |S|\le8$, then $|T|\le2$ by symmetry; and if $9\le |S|\le16$, then $|T|\le1$ by symmetry.

    Each $2$-dimensional vector space over $\Z_2$ has exactly three unordered bases.
    Therefore, for each pair $(V,W)\in S\times T$, there are at most $3\cdot3=9$ possible facet images of $K\ast L$ with edge spans $V$ and $W$.
    Consequently,
    $$
        |\Sigma(\lambda^\R)|\le 9|S||T|\le 144<168.
    $$
    Proposition~\ref{prop:168} implies that $\lambda^\R$ admits a lift.
\end{proof}

\begin{lemma}\label{lem:join-0-2}
    Let $K$ be a nonempty $0$-dimensional simplicial complex, and let $L$ be a pure $2$-dimensional simplicial complex.
    Then every mod~$2$ characteristic map over $K\ast L$ admits a lift.
\end{lemma}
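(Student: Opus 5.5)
The plan is to find one block $B_\alpha$ disjoint from $\Sigma(\lambda^\R)$ and then apply Lemma~\ref{lem:block-avoidance}. Counting facets and using Proposition~\ref{prop:168} does not suffice here; a rough count suggests $|\Sigma(\lambda^\R)|$ can reach about $224$. So the argument has to use the geometry of the bad blocks. Let $P\coloneq\lambda^\R(\cV(K))\subset\Z_2^4\setminus\{0\}$ be the set of images of the vertices of $K$. Every facet of $K\ast L$ has the form $\{v\}\cup\tau$, with $v$ a vertex of $K$ and $\tau$ a triangle of $L$. Its image is $\{p\}\cup\beta$, where $p=\lambda^\R(v)\in P$ and $\beta$ is a basis of the hyperplane $H=\Span\lambda^\R(\tau)$. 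Since $\tau\cup\{v'\}$ is a facet for every vertex $v'$ of $K$, $H$ misses all of $P$. Let $\mathcal H$ be the set of hyperplanes of $\Z_2^4$ disjoint from $P$; it is nonempty, so $P$ lies in the complement of some hyperplane. Then
$$
\Sigma(\lambda^\R)\subset \cF(P,\mathcal H)\coloneq\bigl\{\{p\}\cup\beta \bigm| p\in P,\ \beta \text{ a basis of some } H\in\mathcal H\bigr\},
$$
and it suffices to find a block disjoint from $\cF(P,\mathcal H)$. The conditions are now entirely on the pair $(P,\mathcal H)$, so we may move it by $\GL(4,\Z_2)$.

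The first step is to record the vertex structure of a block. The standard bad block $B$ uses exactly $10$ of the $15$ vertices. The $5$ vertices it misses are $a_1,a_2,a_4,a_8,a_{15}$, which form a \emph{frame}: any four of them form a basis, and the five sum to zero. Since blocks are $\GL(4,\Z_2)$-translates of $B$, each block misses exactly one frame. There are $168$ frames, so this gives a bijection between blocks and frames. Consequently, whenever $P$ is contained in some frame, the block attached to that frame contains no vertex of $P$. It is then disjoint from $\cF(P,\mathcal H)$, and we are done. This covers $|P|\le 2$, and more generally any $P$ in which every four elements are independent (for $|P|\le 4$) or which extends to such a $5$-set.

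The remaining case is when $P$ is not contained in a frame. Then $P$ contains a linearly dependent triple or quadruple; for instance, three vectors $x,y,x+y$ can never lie in a common frame. This forces $P$ to be large or structured, so $\mathcal H$ is small. Hyperplanes missing $P$ correspond to linear functionals $f$ with $f\equiv1$ on $P$. For such $f$ to exist, $P$ must contain no triple $x,y,x+y$, since $f(x+y)=0$. So in this case a dependent set in $P$ is forced to be a $4$-term relation $x+y+z+w=0$, which raises the rank of $P$ and cuts $\mathcal H$ down to at most two hyperplanes, typically one. Up to $\GL(4,\Z_2)$ we may then take $\mathcal H=\{H_0\}$ (or two fixed hyperplanes) and enlarge $P$ to the full complement $\Z_2^4\setminus H_0$ in the worst case. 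We then need a block none of whose five facets has three vertices forming a basis of $H_0$; equivalently, every facet of the block meets $H_0$ in at most two vertices. I would exhibit such a block explicitly by choosing a frame $g$ and checking the five facets of $g^{-1}B$. A convenient way is to take $H_0$ to be the hyperplane containing as few of the $10$ vertices of $B$ as possible, and verify directly that no facet of $B$ has three vertices in it.

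The main obstacle is this last case analysis. It requires classifying, up to $\GL(4,\Z_2)$, the possible pairs $(P,\mathcal H)$ with $\mathcal H\ne\varnothing$ and $P$ not inside a frame. For each class we then need a block that avoids $\cF(P,\mathcal H)$, possibly one that meets $P$ in only a few vertices, with those vertices arranged harmlessly. If an explicit hand check becomes unwieldy, I would fall back on a short computer enumeration over the at most $15$ choices of $\mathcal H$-configuration and the $168$ blocks.
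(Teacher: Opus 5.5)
Your reduction is correct and takes a genuinely different route from the paper. The one piece you leave open is the final check, and it is much shorter than you fear. In the non-frame case you have $\cF(P,\mathcal H)\subseteq\bigcup_{H\in\mathcal H}\{\sigma\in\cF : |\sigma\cap H|=3\}$, so no classification of $P$ is needed. Only $|\mathcal H|\in\{1,2\}$ matters, and $\GL(4,\Z_2)$ is transitive both on hyperplanes and on ordered pairs of distinct hyperplanes.

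The standard block $B$ settles both subcases:
\begin{itemize}
\item $B$ meets $\{x_1=0\}$ in $a_3,a_5,a_6,a_7$, and each facet of $B$ contains at most two of these.
\item $B$ meets $\{x_2=0\}$ in $a_3,a_9,a_{10},a_{11}$, and again each facet of $B$ contains at most two of these.
\end{itemize}
Translating $B$ by a suitable $g$ therefore handles $|\mathcal H|=1$, which is the case $|P|\ge 5$. It also handles $|\mathcal H|=2$, which is the case where $P$ is a four-point affine plane. This second case needs one block that works for both hyperplanes simultaneously, and your sketch, which only mentions a single $H_0$, does not yet supply it.

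Your instruction to pick a hyperplane containing few vertices of $B$ is essential. The even-weight hyperplane contains $a_3,a_5,a_9$, which lie in the single facet $\{a_3,a_5,a_9,a_{14}\}$ of $B$. For the frame case you only need surjectivity of the map from blocks to missed frames. This follows because the map is $\GL(4,\Z_2)$-equivariant and the group is transitive on frames. Also, for $|P|\le 4$, read your condition as ``$P$ is linearly independent''; in your setting this is automatic for $|P|\le3$, since $P$ lies in $f^{-1}(1)$.

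The paper's route is different. It normalizes $v_1=e_1$ and $V=\Span\{v_i+v_1\}$, then splits into cases by $d=\dim V$. In each case it groups the admissible facet images $\{u,p,q,r\}$ (with $u\in e_1+V$ and $\{p,q,r\}$ a basis of a hyperplane containing $V$) into pairs, triples or quadruples lying in a common bad block. This gives at most $112$, $128$, $112$, $56$ blocks met, respectively. The method uses only the five-facet description of a block plus basis counting inside $H$.

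Your route uses more structure: the vertex set of every block is the complement of a frame. This disposes of $d\le 1$, and of the independent subcases $|P|=3$ and $|P|=4$ of $d=2,3$, with no counting at all. A rank argument then reduces the remaining cases to $|\mathcal H|\le 2$, and you exhibit an explicit avoided block rather than a count. The paper's argument is more mechanical and self-contained. Yours is more conceptual and explains why most cases are trivial.
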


\begin{proof}
    Let $\lambda^\R$ be a mod~$2$ characteristic map over $K\ast L$.
    Denote the vertex set of~$K$ by $[m]$.
    The facets of $K\ast L$ are precisely the sets $\{i\}\cup\sigma$, where $i\in[m]$ and $\sigma\in\cF(L)$.
    Set $v_i\coloneq\lambda^\R(i)$ for $i\in[m]$ and 
    $$
        V\coloneq\Span\{v_i+v_1\mid i\in[m]\}.
    $$
    For each facet $\sigma$ of $L$, set
    $$
        H_\sigma\coloneq\Span(\lambda^\R(\sigma)).
    $$
    By the nonsingularity condition, $H_\sigma$ is a hyperplane of $\Z_2^4$ and $v_i\notin H_\sigma$ for every~$i\in[m]$.
    Since $\Z_2^4/H_\sigma$ is $1$-dimensional, we have $v_i+v_j\in H_\sigma$ for all $i,j\in[m]$.
    Hence $V\subset H_\sigma$ for every facet $\sigma$ of $L$.

    Let $d\coloneq\dim V$.
    Since $v_1\notin H_\sigma$ and $V\subset H_\sigma$, we have $v_1\notin V$.
    After applying an element of $\GL(4,\Z_2)$, we may assume that $v_1=e_1$, and $V=\Span\{e_2,\ldots,e_{d+1}\}$, where $\{e_1,e_2,e_3,e_4\}$ is the standard basis of $\Z_2^4$.
    Then $v_i\in V+e_1$ for all $i\in[m]$.

    Thus every facet image of $K\ast L$ is of the form $\{u,p,q,r\}$, where $u\in V+e_1$ and $\{p,q,r\}$ is a basis of a hyperplane $H$ satisfying $V\subset H$ and $e_1\notin H$.
    
    We now show, case by case, that all possible facet images of this form meet fewer than $168$ blocks of the bad-block partition.

    \noindent\textsf{\textbf{Case 1: $d=0$.}}
    In this case $V=0$, so $u=e_1$.
    There are eight hyperplanes $H$ not containing $e_1$, and each such hyperplane has $28$ unordered bases.
    Hence there are $8\cdot28=224$ possible facets of the form $\{e_1,p,q,r\}$.
    For every such facet, the two facets $\{e_1,p,q,r\}$ and $\{e_1,p+e_1,q+e_1,r+e_1\}$ belong to the same bad block, and the second one is again of the same possible form.
    Therefore these possible facets meet at most
    $$
        \frac{8\cdot28}{2}=112<168
    $$
    blocks.

    \noindent\textsf{\textbf{Case 2: $d=1$.}}
    In this case $V=\Span\{e_2\}$.
    There are four hyperplanes $H$ containing~$V$ and not containing $e_1$.
    Fix such an $H$.
    A direct count in the $3$-dimensional space~$H$ shows that the numbers of unordered bases $B_H=\{p,q,r\}$ satisfying 
    $$
        e_2\in B_H, \qquad e_2\text{ is the sum of two vectors in }B_H, \qquad e_2=p+q+r
    $$
    are $12$, $12$, and $4$, respectively.

    If $e_2\in B_H$, say $p=e_2$, then for every $u\in V+e_1$ the two facets $\{u,e_2,q,r\}$ and $\{u+e_2,e_2,q+e_2,r+e_2\}$ belong to the same bad block.
    If $e_2$ is the sum of two vectors in $B_H$, say $e_2=p+q$, then the two facets $\{u,p,q,r\}$ and $\{u,p+u,q+u,r+u\}$ belong to the same bad block, and the second facet is again of the possible form.
    We do not group the remaining case $e_2=p+q+r$.
    Since there are $2$ choices of $u\in V+e_1$, the possible facets meet at most
    $$
        \frac{2\cdot4\cdot12}{2} +\frac{2\cdot4\cdot12}{2} +2\cdot4\cdot4  =128<168
    $$
    blocks.

    \noindent\textsf{\textbf{Case 3: $d=2$.}}
    In this case $V=\Span\{e_2,e_3\}$.
    There are two hyperplanes $H$ containing $V$ and not containing $e_1$.
    Fix such an $H$.
    A direct count shows that the numbers of unordered bases $B_H=\{p,q,r\}$ satisfying
    $$
        |B_H\cap V|=2, \quad |B_H\cap V|=1, \quad \text{ and } \quad  |B_H\cap V|=0
    $$
    are $12$, $12$, and $4$, respectively.

    If $|B_H\cap V|=2$, say $p,q\in V$, then the three facets
    $$
        \{u,p,q,r\}, \quad  \{u+p,p,q+p,r+p\}, \quad \text{ and } \quad \{u+q,p+q,q,r+q\}
    $$
    belong to the same bad block.
    If $|B_H\cap V|=1$, say $p\in V$, then the two facets~$\{u,p,q,r\}$ and~$\{u+p,p,q+p,r+p\}$ belong to the same bad block.
    We do not group the case $|B_H\cap V|=0$.
    Since there are $4$ choices of $u\in V+e_1$, the possible facets meet at most
    $$
        \frac{4\cdot2\cdot12}{3} +\frac{4\cdot2\cdot12}{2} +4\cdot2\cdot4 =112<168
    $$
    blocks.

    \noindent\textsf{\textbf{Case 4: $d=3$.}}
    In this case $V=\Span\{e_2,e_3,e_4\}$.
    Then $H=V$ is the only hyperplane containing $V$.
    For every $u\in V+e_1$ and every basis $\{p,q,r\}$ of $V$, the four facets $\{u,p,q,r\}$, $\{u+p,p,q+p,r+p\}$, $\{u+q,p+q,q,r+q\}$, and $\{u+r,p+r,q+r,r\}$ belong to the same bad block.
    Therefore the possible facets meet at most
    $$
        \frac{8\cdot28}{4}=56<168
    $$
    blocks.

    In every case, $\Sigma(\lambda^\R)$ meets fewer than $168$ blocks of the bad-block partition.
    By Lemma~\ref{lem:block-avoidance}, $\lambda^\R$ admits a lift.
\end{proof}

\begin{theorem}\label{thm:join-type-3-spheres}
    Let $K$ and $L$ be simplicial complexes with nonempty vertex sets.
    If $K\ast L$ is a simplicial $3$-sphere, then $K\ast L$ has the toric lifting property.
\end{theorem}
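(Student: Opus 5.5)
The plan is to reduce to the two join lemmas already proved, Lemma~\ref{lem:join-1-1} and Lemma~\ref{lem:join-0-2}, using the standard structure theory of joins that are spheres.

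The first step is to record the dimension count. If $K\ast L$ is a simplicial $3$-sphere, then it is pure of dimension $3$. Every facet of $K\ast L$ has the form $\tau_1\cup\tau_2$ with $\tau_1\in\cF(K)$ and $\tau_2\in\cF(L)$, so $K$ and $L$ are pure. Their dimensions satisfy $\dim K+\dim L=2$, with $\dim K,\dim L\ge 0$ because both vertex sets are nonempty. I also use the standard fact that a join $K\ast L$ is a homology sphere (in particular a simplicial sphere) only if $K$ and $L$ are themselves homology spheres. This follows from the Künneth-type formula $\widetilde H_{*+1}(K\ast L)\cong \bigoplus \widetilde H_i(K)\otimes \widetilde H_j(L)$ together with the corresponding statement for links. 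Purity is in fact all the two lemmas require, so I do not even need the sphere property of the factors.

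The second step is the case split. Up to swapping $K$ and $L$, which is harmless because $K\ast L\cong L\ast K$, the pair $(\dim K,\dim L)$ is either $(1,1)$ or $(0,2)$.

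In the case $(1,1)$, both $K$ and $L$ are pure $1$-dimensional complexes. Any mod~$2$ characteristic map over $K\ast L$ then lifts by Lemma~\ref{lem:join-1-1}.

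In the case $(0,2)$, $K$ is a nonempty $0$-dimensional complex and $L$ is a pure $2$-dimensional complex. Here Lemma~\ref{lem:join-0-2} applies directly; in fact $K$ must be two points, so $K\ast L$ is the suspension of $L$.

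Since every mod~$2$ characteristic map over $K\ast L$ admits a lift in both cases, $K\ast L$ has the toric lifting property.

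The main point needing care is the first step: justifying that $K$ and $L$ are pure with dimensions summing to $2$, and that neither is empty. Purity follows because any maximal simplex $\tau_1$ of $K$ joined with a facet $\tau_2$ of $L$ gives a maximal simplex of $K\ast L$, which must have $4$ vertices. If $K$ had facets of different sizes, then $K\ast L$ would have maximal simplices of different sizes. The dimension count then follows from $|\tau_1|+|\tau_2|=4$.
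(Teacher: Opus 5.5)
Your proposal is correct and matches the paper's proof. Purity of $K\ast L$ forces $K$ and $L$ to be pure with dimensions $(1,1)$ or $(0,2)$ up to swapping, and Lemmas~\ref{lem:join-1-1} and~\ref{lem:join-0-2} then apply directly; as you note yourself, the homology-sphere remark is not needed.
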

\begin{proof}
    Since $K\ast L$ is pure $3$-dimensional, the complexes $K$ and $L$ are pure and, up to interchanging the factors, their dimensions are either $(1,1)$ or $(0,2)$. 
    Therefore the assertion follows from Lemmas~\ref{lem:join-1-1} and~\ref{lem:join-0-2}.
\end{proof}

\section{Non-liftability of universal complex} \label{non-liftability}

We keep all notation from the previous sections. 
In this section, we show that Lemma~\ref{lem:qle13} is best possible with respect to the cardinality of $S$. 
More precisely, we show that if $|S|\geq 14$, then the inclusion $X[S]\hookrightarrow X(\Z_2^4)$ does not admit a lift.

We use the following finite congruence obstruction. 
The algorithm is only an obstruction test: the output $\UNSAT$ proves non-liftability, while the output $\SAT$ is inconclusive for integral liftability.

Let $K$ be a finite pure $3$-dimensional complex on $[m]$, let $\lambda^\R \colon [m]\to \Z_2^4$ be a mod~$2$ characteristic map, and let $N$ be an even positive integer.
Choose an ordered facet $\tau_0=(p_1,p_2,p_3,p_4)\in \cF(K)$, and put
$$
    M_0=(\lambda^\R(p_1),\lambda^\R(p_2),\lambda^\R(p_3),\lambda^\R(p_4)) \in \GL(4,\Z_2).
$$
Set $\mu^\R(v)\coloneq M_0^{-1}\lambda^\R(v)$ for $v\in \cV(K)$.

For $v\notin\tau_0$, define
$$
    C_N(v)\coloneq \left\{ u\in \Z_N^4  \middle| 
        \begin{array}{l}
            u\equiv \mu^\R(v)\pmod2,\\
            \det(u,e_i,e_j,e_k)\equiv \pm1 (\bmod N) \text{ for }\{v,p_i,p_j,p_k\}\in \cF(K)
        \end{array}
    \right\}.
$$
Let $D_N(v)$ be a set of representatives of the sign orbits $C_N(v)/\{\pm1\}$.
For the base vertices, set $D_N(p_i)\coloneq\{e_i\}$ for $i=1,2,3,4$.

Given a partial assignment of vectors to some vertices of $K$, we call it \emph{admissible} if every facet $\sigma=\{z_1,z_2,z_3,z_4\}\in \cF(K)$ whose four vertices have already been assigned satisfies
$$
    \det(u_{z_1},u_{z_2},u_{z_3},u_{z_4}) \equiv \pm1\pmod N,
$$
where $u_{z_i}\in D_N(z_i)$ for $i = 1, 2, 3, 4$.
\begin{algorithm}[t]
\caption{Finite congruence obstruction test} \label{alg:finite-congruence-obstruction}
\begin{algorithmic}[1]
    \Require A finite pure $3$-complex $K$ on $[m]$, a mod~$2$ characteristic map
    $\lambda^\R \colon [m] \to\Z_2^4$ over $K$, and an even integer $N$.
    \Ensure $\SAT$ or $\UNSAT$.
    
    \State Choose an ordered facet $\tau_0=(p_1,p_2,p_3,p_4)\in \cF(K)$.
    \State Construct the domains $D_N(v)$ as above.
    \State Choose an order $w_1,\ldots,w_r$ of $[m] \setminus\tau_0$.
    \State $\cP\gets\{\phi_0\}$, where $\phi_0(p_i)=e_i$ for $i=1,2,3,4$.
    
    \For{$k=1,\ldots,r$}
        \State
        $$
            \cP\gets  \left\{ \phi\cup\{w_k\mapsto u\} \middle| \phi\in\cP, u\in D_N(w_k), \phi\cup\{w_k\mapsto u\}\text{ is admissible} \right\}.
        $$
        \If{$\cP=\varnothing$}
            \State \Return $\UNSAT$.
        \EndIf
    \EndFor
    
    \State \Return $\mathrm{SAT}$.
\end{algorithmic}
\end{algorithm}

\begin{lemma}\label{lem:finite-congruence-obstruction}
    If Algorithm~\ref{alg:finite-congruence-obstruction} returns $\mathrm{UNSAT}$ for the input $(K,\lambda^\R,N)$, then $\lambda^\R$ admits no integral lift.
\end{lemma}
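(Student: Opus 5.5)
We argue by contraposition: assuming $\lambda^\R$ has an integral lift $\lambda\colon[m]\to\Z^4$, we produce a partial assignment that survives every step of Algorithm~\ref{alg:finite-congruence-obstruction}, so the algorithm cannot return $\UNSAT$. The plan is to normalize $\lambda$ so that it matches the algorithm's conventions, reduce it modulo $N$, and show that at each stage $k$ the set $\cP$ contains the restriction of this reduced lift.

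First, the normalization. Let $M$ be the integral matrix with columns $\lambda(p_1),\dots,\lambda(p_4)$. Since $\tau_0$ is a facet, $\det M=\pm1$, so $M\in\GL(4,\Z)$ and $M\equiv M_0\pmod 2$. Put $\mu\coloneq M^{-1}\lambda$. Then:
\begin{itemize}
\item $\mu$ is again an integral characteristic map, because every facet determinant is multiplied by $\det M^{-1}=\pm1$;
\item $\mu(p_i)=e_i$ for $i=1,2,3,4$;
\item $\mu\equiv M_0^{-1}\lambda^\R=\mu^\R\pmod 2$, using that $M^{-1}$ reduces to $M_0^{-1}$.
\end{itemize}

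Next, reduce modulo $N$. Let $\bar\mu(v)\in\Z_N^4$ be the reduction of $\mu(v)$. Since $N$ is even, reduction mod $N$ followed by reduction mod $2$ is well defined, so $\bar\mu(v)\equiv\mu^\R(v)\pmod 2$. For a vertex $v\notin\tau_0$ and a facet $\{v,p_i,p_j,p_k\}$, we have $\det(\mu(v),e_i,e_j,e_k)=\pm1$ over $\Z$, hence $\pm1$ modulo $N$. Therefore $\bar\mu(v)\in C_N(v)$, and there is a sign $\epsilon_v\in\{\pm1\}$ with $\epsilon_v\bar\mu(v)\in D_N(v)$. For $v=p_i$ set $\epsilon_v=1$, so that $\epsilon_v\bar\mu(v)=e_i\in D_N(p_i)$.

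Now define $\phi(v)\coloneq\epsilon_v\bar\mu(v)$. For any facet $\{z_1,\dots,z_4\}$ of $K$,
$$
\det\bigl(\phi(z_1),\dots,\phi(z_4)\bigr)=\Bigl(\prod_i\epsilon_{z_i}\Bigr)\det\bigl(\mu(z_1),\dots,\mu(z_4)\bigr)\equiv\pm1\pmod N.
$$
So every restriction of $\phi$ is admissible. By induction on $k$, the restriction of $\phi$ to $\tau_0\cup\{w_1,\dots,w_k\}$ lies in $\cP$ after step $k$. The base case holds because the restriction to $\tau_0$ is $\phi_0$. For the inductive step, $\phi(w_k)\in D_N(w_k)$ and the extended assignment is admissible. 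Hence $\cP$ is never empty, and the algorithm returns $\SAT$, which is the required contradiction.

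The only delicate points are bookkeeping. One is the sign-orbit choice of representatives in $D_N$; this is harmless because signs only flip determinants between $\pm1$. The other is the compatibility of the mod $2$ condition after the $\GL(4,\Z)$ change of basis; this is the reason for taking $M$ from the lift itself rather than an arbitrary integral lift of $M_0$.
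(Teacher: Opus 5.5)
Your proof is correct and follows the paper's argument: normalize the hypothetical lift by the inverse of its own matrix on $\tau_0$, reduce modulo $N$ to land in $C_N(v)$, adjust signs to reach the representatives in $D_N(v)$, and check that the resulting assignment is admissible. The only difference is presentation: you spell out the induction over the stages of the algorithm and the sign product in the determinant, where the paper simply asserts that the assignment survives the search.
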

\begin{proof}
    Suppose that $\lambda$ is an integral lift of $\lambda^\R$. 
    Let $\tau_0=(p_1,p_2,p_3,p_4)$ be the ordered facet chosen by the algorithm, and set $M_\lambda=(\lambda(p_1),\lambda(p_2),\lambda(p_3),\lambda(p_4))$.
    Since $\tau_0$ is a facet, $M_\lambda\in GL(4,\Z)$. 
    Hence $\lambda_{\tau_0}(v)\coloneq M_\lambda^{-1}\lambda(v)$ is a normalized integral lift satisfying $\lambda_{\tau_0}(p_i)=e_i$ for $i=1,2,3,4$.
    Moreover, $\lambda_{\tau_0}(v)\equiv M_0^{-1}\lambda^\R(v)=\mu^\R(v)\pmod2$.
    
    For each $v\notin\tau_0$, the reduction of $\lambda_{\tau_0}(v)$ modulo $N$ lies in $C_N(v)$, because the defining determinant conditions for $C_N(v)$ are necessary facet conditions. 
    Replacing a non-base column by its negative preserves the mod $2$ reduction and changes every determinant only by a sign. 
    Therefore, after choosing the sign representatives used in $D_N(v)$, the normalized lift gives an assignment $u_v\in D_N(v)$ for $v\in V(K)$.
    For every facet $\sigma=\{z_1,z_2,z_3,z_4\}\in \cF(K)$, the corresponding integral determinant is $\pm1$. 
    Hence $\det(u_{z_1},u_{z_2},u_{z_3},u_{z_4})\equiv\pm1\pmod N$.
    Thus this assignment survives the search, so the algorithm would return $\SAT$, contradicting the assumption.
    Therefore no integral lift exists.
\end{proof}

The non-liftability of~$X(\Z_2^4)$ was originally proved by Ayzenberg~\cite{Ayzenberg2016} and Sun~\cite{Sun2017}.
Using Algorithm~\ref{alg:finite-congruence-obstruction}, we have a stronger proposition.

\begin{proposition}\label{prop:14-subset-nonliftable}
    Let $S\subset \cV$ with $|S|\geq14$. 
    Then the inclusion $X[S]\hookrightarrow X(\Z_2^4)$ does not admit a lift to a nondegenerate simplicial map~$X[S]\to X(\Z^4)$.
\end{proposition}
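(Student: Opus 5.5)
The plan is to reduce to a single orbit representative and then certify non-liftability with Algorithm~\ref{alg:finite-congruence-obstruction} and Lemma~\ref{lem:finite-congruence-obstruction}.

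\emph{Reduction to $|S|=14$.} If $S\subset S'$ and $X[S']\hookrightarrow X(\Z_2^4)$ had a lift, then restricting it to the induced subcomplex $X[S]$ would give a lift of $X[S]\hookrightarrow X(\Z_2^4)$. So it suffices to treat $|S|=14$, that is, $S=\cV\setminus\{a\}$ for a single vertex $a$.

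\emph{Reduction to one representative.} $\GL(4,\Z_2)$ acts transitively on nonzero vectors, so all $14$-element subsets form a single orbit. If $g(S)=S_0$ and $X[S]$ had a lift $\ell$, choose an integral lift $G\in\GL(4,\Z)$ of $g$. Then $G\ell\circ g^{-1}$ is a lift of $X[S_0]$, exactly as in the proof of Lemma~\ref{lem:qle13}. We fix $S_0=\cV\setminus\{a_{15}\}$, or any convenient choice.

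\emph{Certifying the representative.} Apply Algorithm~\ref{alg:finite-congruence-obstruction} to $K=X[S_0]$, a pure $3$-dimensional complex, with $\lambda^\R$ the inclusion. Take $N=4$ first, then $N=6$, $8$ or $12$ if needed. If the run returns $\UNSAT$, Lemma~\ref{lem:finite-congruence-obstruction} shows that no integral lift exists, which is what we want.

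\emph{Keeping the search manageable.} I would choose $\tau_0$ to be a facet lying in many facets with the remaining vertices. I would order $w_1,\dots,w_{10}$ so that each new vertex closes as many facets as possible with the already assigned ones; this prunes early. Each domain $D_N(v)$ is small: for $N=4$ it has at most $2^4/2=8$ elements before the determinant conditions against $\tau_0$ are imposed. So the search tree is at most about $8^{10}$ and in practice far smaller.

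\emph{Fallback by hand.} A hand-checkable alternative is to normalize $\tau_0$ to the standard basis. For vertices like $e_i+e_j$, the unimodularity conditions against the base facets then force the off-diagonal entries, up to sign, to be $\pm1$. Propagating through the vertices $e_i+e_j+e_k$ and finally using one remaining facet should force a determinant of $\pm3$ or an even value. This mirrors the Ayzenberg--Sun argument for all $15$ vertices, but the missing vertex must be routed around.

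\emph{Main obstacle.} The hardest step is actually producing the $\UNSAT$ certificate. With $a_{15}$ removed, the obstruction may not be visible modulo $4$, and a larger modulus enlarges the domains. I would try $N=4$ first. If that returns $\SAT$, I would try $N=8$ or $12$, or remove a different vertex so that the contradiction cycle lies entirely within $S_0$.
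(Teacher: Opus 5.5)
Your two reductions are correct and coincide with the paper's: restricting a lift of $X[S']$ to $X[S]$, and transporting along $\GL(4,\Z_2)$ via an integral lift $G$. The gap is that everything after this is conditional. The proposition is proved only once some run of Algorithm~\ref{alg:finite-congruence-obstruction} actually returns $\UNSAT$. You neither produce such a run nor give a reason to expect one for the moduli you list.

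Your first choice, $N=4$, can never succeed. Every candidate vector reduces to $\mu^\R(v)$ modulo $2$, and $\mu^\R$ is still a mod~$2$ characteristic map, so every facet determinant is odd. Every odd integer is $\equiv\pm1\pmod 4$. Hence every assignment is admissible and the mod-$4$ test always returns $\SAT$. The modulus must at least separate $\pm1$ from the bad-block value $\pm3$; modulo $8$ the odd residues split as $\{1,7\}$ versus $\{3,5\}$.

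Your other fallbacks are also empty. For fixed $N$, the outcome of the test depends only on the $\GL(4,\Z_2)$-orbit of $(K,\lambda^\R)$, not on $\tau_0$ or the vertex order. The reason is that $g$ leaves $\mu^\R$ unchanged. Also, an admissible assignment can be renormalized at another base facet by the inverse of that facet's matrix, whose determinant is $\pm1$ modulo $N$. So deleting a different vertex or choosing another base facet cannot turn $\SAT$ into $\UNSAT$.

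The hand sketch is not an argument either. The base facets pin only the support coordinates of a lifted vector to $\pm1$; the other coordinates are only known to be even. So there is no finite propagation to carry out. Moreover, an ``even determinant'' can never arise, since all lifts reduce mod~$2$ to bases.

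The missing ingredient is exactly the certificate the paper supplies. It takes $N=8$ and $\tau_0=(a_8,a_4,a_2,a_1)$, so that $M_0$ is the identity. The domains are then explicit: $u_i\in\{1,7\}$ for $i\in\supp(v)$, $u_i\in\{0,2,4,6\}$ otherwise, normalized by $u_{m(v)}=1$. They have sizes $32$ and $16$, giving a search space of $2^{46}$. With the vertex order $a_3,a_5,a_6,a_7,a_9,\dots,a_{14}$, the numbers of surviving partial assignments are $32,192,1280,7168,352,256,192,96,32,0$. So the algorithm returns $\UNSAT$ and Lemma~\ref{lem:finite-congruence-obstruction} concludes. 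Without this computation, or an equivalent explicit contradiction, your proposal is a search strategy rather than a proof.
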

\begin{proof}
    It is enough to consider $S=\cV\setminus\{a_{15}\}$.
    Indeed, $\GL(4,\Z_2)$ acts transitively on the nonzero vectors of $\Z_2^4$, and hence transitively on the $14$-element subsets of~$\cV$, since each such subset is the complement of one nonzero vector. 
    Moreover, any element of $\GL(4,\Z_2)$ has an integral lift in $\GL(4,\Z)$, so liftability is preserved under this action.
    The case $|S|=15$ follows by restricting a hypothetical lift of~$X[\cV]$ to any $14$-vertex induced subcomplex.
    
    We apply Algorithm~\ref{alg:finite-congruence-obstruction} to
    $$
        K=X[S],\quad \lambda^\R=\iota \colon S\hookrightarrow\Z_2^4, \quad \text{ and } \quad N=8.
    $$
    Choose the ordered facet $\tau_0=(a_8, a_4, a_2, a_1)$, and set $W\coloneq S\setminus \tau_0$.
    For $v=(v_1,v_2,v_3,v_4)\in W$, put
    $$
        \supp(v)\coloneq\{i\mid v_i=1\} \quad \text{ and } \quad m(v)\coloneq\min\supp(v).
    $$
    In this special case, Algorithm~\ref{alg:finite-congruence-obstruction} produces
    the following domains
    $$
        D_v= \left\{ u=(u_1,u_2,u_3,u_4)\in \Z_8^4 \middle|
            \begin{array}{ll}
                u_i\in\{1,7\}, & \text{if }v_i=1,\\
                u_i\in\{0,2,4,6\}, & \text{if }v_i=0,\\
                u_{m(v)}=1
            \end{array}
        \right\}
    $$
    for each $v\in W$.
    For the four base vertices, the domains are
    $$
        D_{a_8}=\{e_1\},\quad D_{a_4}=\{e_2\},\quad D_{a_2}=\{e_3\},\quad \text{ and } \quad D_{a_1}=\{e_4\}.
    $$
    
    Let us explain the above description of $D_v$. 
    If $v_i=1$, then $v$, together with the three standard basis vectors $e_j$ for $j\neq i$, forms a facet of~$X[S]$. 
    Hence the determinant condition for this facet forces the $i$th coordinate of any normalized lift to be congruent to $\pm1$ modulo $8$. 
    If $v_i=0$, the parity condition gives an even residue modulo $8$. 
    Finally, the condition $u_{m(v)}=1$ chooses one representative from the sign orbit~$\{u,-u\}$.
    
    The set $W$ consists of six vertices of support size $2$ and four vertices of support size $3$, and
    $$
        |D_v|= \begin{cases}
            32, & |\supp(v)|=2,\\
            16, & |\supp(v)|=3.
        \end{cases}
    $$
    Thus the normalized mod $8$ search space has size $32^6\cdot16^4=2^{46}$ before imposing the remaining facet determinant conditions.
    
    Since $X(\Z_2^4)$ has $840$ facets and the number of facets containing $a_{15}$ is $\frac{14\cdot12\cdot8}{3!}=224$, we have $|\cF(X[S])|=840-224=616$.
    
    We run Algorithm~\ref{alg:finite-congruence-obstruction} with the vertex order
    $$
        a_3, a_5, a_6, a_7, a_9, a_{10}, a_{11}, a_{12}, a_{13},\text{ and } a_{14}.
    $$
    All arithmetic is performed in $\Z_8$. 
    The numbers of surviving partial assignments after each step are as follows:
    $$
        \begin{array}{c|cccccccccc}
            \text{last assigned vertex} & a_3 & a_5 & a_6 & a_7 & a_9 & a_{10} & a_{11} & a_{12} & a_{13} & a_{14} \\ \hline
            \text{surviving assignments} & 32 & 192 & 1280 & 7168 & 352 & 256 & 192 & 96 & 32 & 0 .
        \end{array}
    $$
    Thus Algorithm~\ref{alg:finite-congruence-obstruction} returns $\UNSAT$.
    
    By Lemma~\ref{lem:finite-congruence-obstruction}, the inclusion~$X[\cV\setminus\{a_{15}\}] \hookrightarrow X(\Z_2^4)$ does not admit an integral lift, as desired.
\end{proof}

Set $X = X(\Z_2^4)$, and fix the standard facet $\sigma_0\coloneq\{a_1,a_2,a_4,a_8\}\in \cF(X)$.

\begin{lemma} \label{lem:any_pair_appear}
    For every facet $\sigma\in \cF(X)$, there exist 
    \begin{itemize} 
        \item[(1)] a simplicial 3-sphere $K_\sigma$,
        \item[(2)] a nondegenerate simplicial map $\widehat{{\lambda}_\sigma^\R}: K_\sigma\to X$,
        \item[(3)] two distinct facets $\tau^0_\sigma, \tau^1_\sigma\in \cF(K_\sigma)$,
    \end{itemize} 
    such that 
    $$  
        \widehat{{\lambda}_\sigma^\R}(\tau_\sigma^0)=\sigma_0 \quad \text{ and } \quad  \widehat{{\lambda}_\sigma^\R}(\tau_\sigma^1)=\sigma.
    $$  
\end{lemma}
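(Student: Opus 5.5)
The plan is to build $K_\sigma$ by gluing together simple local moves, using connectivity of the facet-adjacency (dual) graph of $X$. Say two facets of $X$ are \emph{adjacent} if they share a $2$-face. Such a pair looks like $\{u,p,q,r\}$ and $\{u',p,q,r\}$ with $u,u'\notin\Span\{p,q,r\}$. First I will show that this dual graph of $X$ is connected. Since $\GL(4,\Z_2)$ acts transitively on facets, it suffices to connect $\sigma_0$ to $g\sigma_0$ for each generator $g$. I take the elementary transvections $e_i\mapsto e_i+e_j$ as generators. For such $g$, the facets $\sigma_0=\{e_1,e_2,e_3,e_4\}$ and $g\sigma_0$ differ in exactly one vertex ($e_i$ versus $e_i+e_j$), so they are adjacent. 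Hence there is a path $\sigma_0=\rho_0,\rho_1,\dots,\rho_k=\sigma$ of facets of $X$ in which consecutive facets share a triangle.

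Next I realize such a path by a simplicial $3$-sphere. I start with the boundary of the $4$-simplex on vertices $\{1,\dots,5\}$ and map the facet $\{1,2,3,4\}$ to $\rho_0$. The vertex $5$ must go to a vector not lying in any of the four planes spanned by triples of $\rho_0$; the vector $u_1+u_2+u_3+u_4$ works. This gives a sphere together with a nondegenerate map.

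Then I extend inductively along the path by stellar subdivisions. Suppose the current sphere $K$ has a facet $\tau$ mapping to $\rho_{t}=\{u,p,q,r\}$, and let $\rho_{t+1}=\{u',p,q,r\}$. I subdivide $\tau$ by a new vertex $w$ and send $w\mapsto u'$. The four new facets are $\{w,p,q,r\}$ (with $\tau$'s vertices renamed by their images), $\{w,u,q,r\}$, $\{w,u,p,r\}$ and $\{w,u,p,q\}$. The first maps to $\rho_{t+1}$, which is a basis. Each of the others is a basis exactly when $u'\notin\Span\{u,q,r\}$, and similarly for the other two triples. Stellar subdivision keeps $K$ a simplicial $3$-sphere, and it removes only $\tau$. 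So at the end I take $\tau^0_\sigma$ to be a facet still mapping to $\sigma_0$ (one survives once I subdivide a copy rather than the original) and $\tau^1_\sigma$ the facet mapping to $\sigma$. If $\sigma=\sigma_0$, distinctness is handled by subdividing once and using another facet with the same image, or simply by repeating the path out and back.

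The main obstacle is the genericity condition on $u'$: it must avoid $\Span\{u,q,r\}$, $\Span\{u,p,r\}$ and $\Span\{u,p,q\}$. This can fail; for example, in the transvection step $u'=u+p$ lies in $\Span\{u,p,q\}$. My first fix is to insert intermediate facets so that each step is \enquote{generic}. I will replace a single step by a short detour through an auxiliary vertex $x$. I choose $x$ outside all the relevant planes, for instance $x=u+p+q+r$, which also avoids the span of $\{u',\dots\}$ in the needed way. I first subdivide $\tau$ with $x$, then subdivide the resulting facet $\{x,p,q,r\}$ with $u'$. This requires checking that $x$ and $u'$ satisfy the avoidance conditions with respect to the relevant triples. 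If that check is awkward, my alternative is to use a bistellar $2$--$3$ move or a connected sum with a boundary of a $4$-simplex mapped suitably. The finite check over the transvection generators can be made explicit because only a few vectors are involved.
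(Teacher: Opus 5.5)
Your connectivity argument for the dual graph of $X$ is fine; it is the same path-of-adjacent-facets idea the paper uses. The realization step, however, has a genuine gap, and it fails for every step of the path, not only occasionally.

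Over $\Z_2$, subdividing a facet with image $\{y_1,y_2,y_3,y_4\}$ by a new vertex $w$ is nondegenerate only if every coefficient of the image of $w$ in the basis $y_1,\dots,y_4$ is nonzero. That forces the image of $w$ to be exactly $y_1+y_2+y_3+y_4$. So a stellar subdivision of a facet over $\{u,p,q,r\}$ can only produce $\{u+p+q+r,p,q,r\}$, never $\{u+p,p,q,r\}$ or any other neighbour. Your detour shows this concretely: after inserting $x=u+p+q+r$, subdividing $\{x,p,q,r\}$ by $u'$ requires $u'\notin\Span\{x,q,r\},\Span\{x,p,r\},\Span\{x,p,q\}$, which forces $u'=x+p+q+r=u$, so you return to the start.

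More globally, start from $\partial\Delta^4$, whose fifth vertex is forced in the same way. Then every complex you build using stellar subdivisions, connected sums with a $\partial\Delta^4$ (the same operation), or $2$--$3$ moves (which create no new vertices) has all vertex images in the five-element frame $\{u,p,q,r,u+p+q+r\}$, because any four elements of this frame sum to the fifth. Hence none of your proposed tools ever leaves this frame, and the path $\rho_0,\dots,\rho_k$ cannot be followed beyond it.

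The missing ingredient is a sphere in which non-adjacent vertices share a colour, so that exactly one vertex of a facet can be exchanged while the other three stay fixed. The paper uses the boundary of the cross-polytope $\partial\Delta^1\ast\partial\Delta^1\ast\partial\Delta^1\ast\partial\Delta^1$. It colours the two vertices of the first three factors by $p,p$, then $q,q$, then $r,r$, and those of the last factor by $u$ and $u'$. Every facet then maps to $\{u,p,q,r\}$ or $\{u',p,q,r\}$, both bases. Chain these spheres along your path by connected sums along facets with equal images, using the colour-preserving identification; this gives the lemma.
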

\begin{proof}
    If $\sigma=\sigma_0$, the assertion is immediate. 
    Thus assume $\sigma\ne\sigma_0$.    
    
    We first prove the statement for a facet $\sigma\in \cF(X)$ satisfying $|\sigma\cap \sigma_0| = 3$. 
    Without loss of generality, assume that $\sigma = \{a_1, a_2, a_4, v\}$. 
    Let 
    $$
        K_\sigma = \partial\Delta_1^1 \ast \partial\Delta_2^1 \ast \partial\Delta_3^1 \ast \partial\Delta_4^1,
    $$ where $\Delta_i^1$ is a copy of the 1-simplex $\Delta^1$ for $i = 1, 2, 3, 4$. 
    Color the two vertices of~$\partial \Delta_1^1$ by~$a_1$, the two vertices of~$\partial \Delta_2^1$ by~$a_2$, the two vertices of~$\partial\Delta_3^1$ by~$a_4$, and the two vertices of~$\partial \Delta_4^1$ by~$a_8$ and~$v$.
    The coloring of vertices of any facet of~$K_\sigma$ is either $a_1, a_2, a_4, a_8$, or $a_1, a_2, a_4, v$. 
    So this coloring defines a nondegenerate simplicial map~$\widehat{{\lambda}_\sigma^\R} \colon K_\sigma\to X$ satisfying (1)--(3).
    
    Then for an arbitrary facet $\sigma\in \cF(X)$, since the basis graph of $\Z_2^4$ is connected, there exists a sequence of facets~$\sigma_0, \sigma_1, \dots, \sigma_n = \sigma\in \cF(X)$ such that $|\sigma_i\cap \sigma_{i+1}| = 3$ for all $0\leq i\leq n-1$. 
    By the above argument, for each $i$, there exists a simplicial 3-sphere $K_i$, a nondegenerate simplicial map $\widehat{{\lambda}_i^\R}\colon K_i\to X$, and two distinct facets~$\tau^0_i, \tau^1_i\in \cF(K_i)$ such that
    $$
        \widehat{\lambda_i^{\R}}(\tau^0_i) = \sigma_i \quad \text{ and }\quad  \widehat{\lambda_i^{\R}}(\tau^1_i) = \sigma_{i+1}.
    $$
    Since $\widehat{\lambda_0^{\R}}(\tau_0^1) = \sigma_1 = \widehat{\lambda_1^{\R}}(\tau_1^0)$ and $\tau_0^1$ and $\tau_1^0$ are simplices, there exists an isomorphism $\varphi_0 \colon \tau_0^1\to \tau_1^0$ such that $\widehat{\lambda_0^{\R}}|_{\tau_0^1} = \widehat{\lambda_1^{\R}}|_{\tau_1^0}\circ \varphi_0$. 
    Let $K_1' = K_0\#_{\varphi_0} K_1$ be the connected sum of $K_0$ and $K_1$ along the facets $\tau_0^1$ and $\tau_1^0$ via $\varphi_0$.
    Then we obtain the following properties.
    \begin{itemize}
        \item $K_1'$ is a simplicial 3-sphere.
        \item Since $\widehat{\lambda_0^{\R}}|_{\tau_0^1} =  \widehat{\lambda_1^{\R}}\circ \varphi_0$, the two maps~$\widehat{\lambda_0^{\R}}$ and~$\widehat{\lambda_1^{\R}}$ induce a well-defined nondegenerate simplicial map $\widehat{\lambda^{\R}} \colon K_1'\to X$ such that $\widehat{\lambda^{\R}}|_{K_0\setminus \tau_0^1} = \widehat{\lambda_0^{\R}}$ and $\widehat{\lambda^{\R}}|_{K_1\setminus \tau_1^0} = \widehat{\lambda_1^{\R}}$. 
        \item $\tau_0^0, \tau_1^1$ are distinct facets of $K_1'$ with $\widehat{\lambda^{\R}}(\tau_0^0) = \sigma_0$ and $\widehat{\lambda^{\R}}(\tau_1^1) = \sigma_2$.
    \end{itemize}
    Iterating this process yields a simplicial 3-sphere and a nondegenerate simplicial map satisfying (1)--(3).
\end{proof}

\begin{proposition}\label{prop:surjective-image} 
    There exist a simplicial $3$-sphere $K$ and a nondegenerate simplicial map~$\widehat{\lambda^{\R}}\colon K\to X$ which is surjective on facets. 
\end{proposition}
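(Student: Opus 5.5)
The plan is to build $K$ as an iterated connected sum of the spheres provided by Lemma~\ref{lem:any_pair_appear}, all glued along facets that map to the standard facet $\sigma_0$. For each $\sigma\in\cF(X)$, Lemma~\ref{lem:any_pair_appear} gives a simplicial $3$-sphere $K_\sigma$, a nondegenerate simplicial map $\widehat{\lambda_\sigma^\R}\colon K_\sigma\to X$, and distinct facets $\tau^0_\sigma,\tau^1_\sigma$ with images $\sigma_0$ and $\sigma$. Enumerate the $840$ facets of $X$ as $\sigma^{(1)},\dots,\sigma^{(840)}$. We build the sphere step by step: start with $L_1=K_{\sigma^{(1)}}$ and the map $\widehat{\lambda^\R_{\sigma^{(1)}}}$.

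In the inductive step we assume that $L_k$ is a simplicial $3$-sphere with a nondegenerate map to $X$ whose facet image contains $\sigma^{(1)},\dots,\sigma^{(k)}$. We also keep a facet $\rho_k$ of $L_k$ mapping to $\sigma_0$ that is not needed to witness any of the $\sigma^{(j)}$. We then glue $K_{\sigma^{(k+1)}}$ onto $L_k$ along $\rho_k$ and $\tau^0_{\sigma^{(k+1)}}$. As in the proof of Lemma~\ref{lem:any_pair_appear}, both facets map bijectively onto $\sigma_0$, so there is a unique isomorphism $\varphi\colon\rho_k\to\tau^0_{\sigma^{(k+1)}}$ compatible with the two maps. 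The connected sum $L_{k+1}=L_k\#_\varphi K_{\sigma^{(k+1)}}$ is then a simplicial $3$-sphere, and it carries a well-defined nondegenerate map to $X$.

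The only facets that disappear are $\rho_k$ and $\tau^0_{\sigma^{(k+1)}}$. The witnesses for $\sigma^{(1)},\dots,\sigma^{(k)}$ survive, and $\tau^1_{\sigma^{(k+1)}}$ survives too, since it is distinct from $\tau^0_{\sigma^{(k+1)}}$. After $840$ steps we take $K=L_{840}$, whose facet image is all of $\cF(X)$.

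The main obstacle is the bookkeeping needed to guarantee that a spare facet mapping to $\sigma_0$ always exists, because each gluing consumes one such facet. There are two ways to handle this.

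\begin{enumerate}
\item Check that every $K_\sigma$ in the construction has at least two facets over $\sigma_0$. In the join $\partial\Delta^1_1\ast\cdots\ast\partial\Delta^1_4$ the facets colored $a_1,a_2,a_4,a_8$ number $8$. This persists through the iterated connected sums in Lemma~\ref{lem:any_pair_appear}, since only one facet over each intermediate $\sigma_i$ is removed at each step. Hence $K_\sigma$ has a facet over $\sigma_0$ other than $\tau^0_\sigma$, and we can take $\rho_{k+1}$ to be such a facet of $K_{\sigma^{(k+1)}}$, which is distinct from $\tau^1_{\sigma^{(k+1)}}$ when $\sigma^{(k+1)}\ne\sigma_0$.
\item Alternatively, avoid the issue by first gluing in an extra copy of $K_{\sigma_0}$ whenever a fresh $\sigma_0$-facet is needed.
\end{enumerate}

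A cleaner variant is a star-shaped construction. Take a single sphere $L$ that has at least $840$ facets over $\sigma_0$, for example a connected sum of enough copies of the $4$-fold join. Then glue each $K_\sigma$ to a different one of these facets. Different gluings use disjoint facets of $L$, so they do not interfere, and a final check shows that each $\tau^1_\sigma$ survives in the result.
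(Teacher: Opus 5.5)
Your proposal is correct, and its star-shaped variant is exactly the paper's proof. The paper takes the base sphere $\partial\Delta^1\ast\partial\Delta^1\ast C_{2r}$, colored $a_1$, $a_2$, and alternately $a_4,a_8$ on the cycle, so that all of its $8r$ facets map to $\sigma_0$; it then glues each $K_{\sigma_j}$ along its own facet $\rho_j$, which avoids the spare-facet bookkeeping of your chain version.

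In the chain version, option~(1) works, but it relies on the internal construction in the proof of Lemma~\ref{lem:any_pair_appear} rather than on its statement. Option~(2), as literally stated, gains nothing. Gluing a $K_{\sigma_0}$ that is only known to have the two $\sigma_0$-facets $\tau^0_{\sigma_0},\tau^1_{\sigma_0}$ consumes one $\sigma_0$-facet and supplies exactly one. You would need a $K_{\sigma_0}$ with at least three such facets, for example the $4$-fold join $\partial\Delta^1\ast\partial\Delta^1\ast\partial\Delta^1\ast\partial\Delta^1$ colored entirely by $\sigma_0$.
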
 
\begin{proof} 
    Write $\cF(X)=\{\sigma_0,\sigma_1,\dots,\sigma_r\}$, where $r=839$.
    
    We first construct a simplicial $3$-sphere~$K_0$ together with a nondegenerate simplicial map $\widehat{\lambda^{\R}_0}\colon K_0\to X$ such that every facet of $K_0$ maps onto $\sigma_0$, and $K_0$ has at least $r$ pairwise distinct facets. 
    Let $K_0\coloneq\partial\Delta^1 \ast \partial\Delta^1 \ast C_{2r}$, where $C_{2r}$ is a cycle on $2r$ vertices.
    Color the two vertices of the first copy of $\partial\Delta^1$ by $a_1$, the two vertices of the second copy by $a_2$, and the vertices of the cycle alternately by $a_4$ and $a_8$. 
    This defines a nondegenerate simplicial map $\widehat{\lambda^{\R}_0}\colon K_0\to X$, and every facet of $K_0$ maps to $\sigma_0$. 
    Moreover, $f_3(K_0)=2\cdot 2\cdot (2r)=8r$, so $K_0$ contains at least $r$ pairwise distinct facets. 
    
    Choose $r$ distinct facets $\rho_1,\dots,\rho_r\in \cF(K_0)$.
    We now construct inductively simplicial $3$-spheres $K^{(0)},K^{(1)},\dots,K^{(r)}$ and nondegenerate simplicial maps 
    $$
        \widehat{\lambda^{\R}_{(j)}}\colon K^{(j)}\to X
    $$ 
    such that $\rho_i$ is still a facet of $K^{(j)}$, $\widehat{\lambda^{\R}_{(j)}}(\rho_i) = \sigma_0$ for all $i = j+1, \dots, r$, and $\{\sigma_0,\sigma_1,\dots,\sigma_j\}\subset \operatorname{im}(\widehat{\lambda^{\R}_{(j)}})$.
    Set $K^{(0)}\coloneq K_0$ and $\widehat{\lambda^{\R}_{(0)}} \coloneq \widehat{\lambda^{\R}_0}$.
    
    Suppose that $K^{(j-1)}$ and $\widehat{\lambda^{\R}_{(j-1)}}$ have been constructed. 
    By Lemma \ref{lem:any_pair_appear}, there exist a simplicial 3-sphere $K_{\sigma_j}$, a nondegenerate simplicial map $\widehat{\lambda^{\R}_{\sigma_j}}: K_{\sigma_j}\to X$, and two distinct facets $\tau_{\sigma_j}^0$, $\tau_{\sigma_j}^1$ of $K_{\sigma_j}$ such that $\widehat{\lambda^{\R}_{\sigma_j}}(\tau_{\sigma_j}^0) = \sigma_0$ and $\widehat{\lambda^{\R}_{\sigma_j}}(\tau_{\sigma_j}^1) = \sigma_j$.
    Since $\widehat{\lambda^{\R}_{(j-1)}}(\rho_j)=\sigma_0= \widehat{\lambda^{\R}_{\sigma_j}}(\tau_{\sigma_j}^0)$, there exists a unique color-preserving simplicial isomorphism $\phi_j\colon \rho_j\to \tau_{\sigma_j}^0$ such that $\widehat{\lambda^{\R}_{(j-1)}}\big|_{\rho_j} = \widehat{\lambda^{\R}_{\sigma_j}}\circ \phi_j$.
    Now take the simplicial connected sum $K^{(j)}\coloneq K^{(j-1)}\#_{\phi_j} K_{\sigma_j}$ along the facets~$\rho_j$ and~$\tau_{\sigma_j}^0$. 
    Since connected sum of simplicial $3$-spheres along facets is again a simplicial $3$-sphere, $K^{(j)}$ is a simplicial $3$-sphere. 
    Moreover, the maps $ \widehat{\lambda^{\R}_{(j-1)}}$ and $\widehat{\lambda^{\R}_{\sigma_j}}$ glue to a nondegenerate simplicial map $\widehat{\lambda^{\R}_{(j)}}$. 
    By construction, the facet~$\tau_{\sigma_j}^1$ survives the connected sum, so $\sigma_j$ is contained in the image of $\widehat{\lambda^{\R}_{(j)}}$. 
    Also, all previously realized facets remain in the image. 
    Thus 
    $$
        \{\sigma_0,\sigma_1,\dots,\sigma_j\}\subset \operatorname{im}(\widehat{\lambda^{\R}_{(j)}}).
    $$
    After $r$ steps, we obtain a simplicial $3$-sphere $K\coloneq K^{(r)}$ and a nondegenerate simplicial map~$\widehat{\lambda^{\R}} \coloneq \widehat{\lambda^{\R}_{(r)}}\colon K\to X$, as desired.
\end{proof}

Proposition~\ref{prop:14-subset-nonliftable} shows that the sufficient criterion obtained by lifting the inclusion~$X[S] \hookrightarrow X(\Z_2^4)$ cannot cover all image subcomplexes of size at least $14$, while Proposition~\ref{prop:surjective-image} shows that the full universal complex is visible from simplicial $3$-spheres.

\providecommand{\bysame}{\leavevmode\hbox to3em{\hrulefill}\thinspace}
\providecommand{\MR}{\relax\ifhmode\unskip\space\fi MR }
\providecommand{\MRhref}[2]{%
  \href{http://www.ams.org/mathscinet-getitem?mr=#1}{#2}
}
\providecommand{\href}[2]{#2}

\end{document}